\newtheorem{theorem}{Theorem}[section]
\newtheorem{lemma}[theorem]{Lemma}
\newtheorem{proposition}[theorem]{Proposition}
\newenvironment{acknowledgement}[1][Acknowledgement]{\textbf{#1.} }{ }
\numberwithin{counter}{section}
\def\N{\mathbb{N}}
\def\Z{\mathbb{Z}}
\def\Q{\mathbb{Q}}
\def\R{\mathbb{R}}
\def\E{\mathbb{E}}
\def\P{\mathbb{P}}
\def\eps{\varepsilon}
\newcommand{\eqand}{ \enskip \text{ and } \enskip }
\providecommand{\abs}[1]{\left\lvert#1\right\rvert}
\providecommand{\norm}[1]{\left\lVert#1\right\rVert_{\infty}}
\providecommand{\tv}[1]{\left\lVert#1\right\rVert_{TV}}
\DeclareMathOperator{\Poi}{\textnormal{Poi}}
\DeclareMathOperator{\diam}{\mathsf{diam}}
\begin{document}

\title{Poisson Statistics of Eigenvalues in the Hierarchical Dyson Model}

\author{Alexander Bendikov}
\thanks{The first author is supported by NCN Grant DEC-2012/05/B/ST 1/00613 of the Polish National
Center of Sciences.}
\address{Alexander Bendikov, Institute of Mathematics, Wroclaw University, Wroclaw, Poland.}
\email{\texttt{bendikov@math.uni.wroc.pl}}
 
\author{Anton Braverman}
\address{Anton Braverman, School of Operations Research and Information Engineering, Cornell University, Ithaca, NY.}
\email{\texttt{ab2329@cornell.edu}}
 
\author{John Pike}
\thanks{The third author is supported in part by NSF grant DMS-0739164.}
\address{John Pike, Department of Mathematics, Cornell University, Ithaca, NY.}
\email{\texttt{jpike@cornell.edu}}

\date{DRAFT of \today}
\keywords{Poisson approximation, hierarchical Laplacian, ultrametric measure space, field of $p$-adic numbers, 
fractional derivative, point spectrum, integrated density of states, point process}
\subjclass[2010]{05C05, 47S10, 60F05, 60J25, 81Q10}

\begin{abstract}
Let $(X,d)$ be a locally compact separable ultrametric space. Given a
measure $m$ on $X$ and a function $C$ defined on the set $\mathcal{B}$ of
all balls $B\subset X$ we consider the hierarchical Laplacian $L=L_{C}$.
The operator $L$ acts in $L^{2}(X,m)$, is essentially self-adjoint, and has a
purely point spectrum. Choosing a family $\{\eps(B)\}_{B\in \mathcal{B}}$ 
of i.i.d. random variables, we define the perturbed function 
$\mathcal{C}(B)=C(B)(1+\eps(B))$ and the perturbed hierarchical Laplacian 
$\mathcal{L}=L_{\mathcal{C}}$. All outcomes of the perturbed operator $\mathcal{L}$ 
are hierarchical Laplacians. In particular they all have purely
point spectrum. We study the empirical point process $M$ defined in terms of 
$\mathcal{L}$-eigenvalues. Under some natural assumptions $M$ can be approximated 
by a Poisson point process. Using a result of Arratia, Goldstein, 
and Gordon based on the Chen-Stein method, we provide total variation convergence 
rates for the Poisson approximation. We apply our theory to random
perturbations of the operator $\mathfrak{D}^{\alpha }$, the $p$-adic
fractional derivative of order $\alpha >0$. This operator, related to the
concept of $p$-adic Quantum Mechanics, is a hierarchical Laplacian which acts in $L^{2}(X,m)$ 
where $X=\Q_{p}$ is the field of $p$-adic numbers and $m$ is Haar measure. 
It is translation invariant and the set $\mathsf{Spec}(\mathfrak{D}^{\alpha })$ 
consists of eigenvalues $p^{\alpha k}$, $k\in \Z$, each of which has infinite multiplicity.
\end{abstract}

\maketitle

\setcounter{tocdepth}{1}
\tableofcontents

\section{Introduction}
\label{sec:intro}
\setcounter{equation}{0}
The concept of hierarchical lattice and hierarchical distance was proposed by 
F.J. Dyson in his famous papers on the phase transition for a  one-dimensional
ferromagnetic model with long range interaction \cite{Dyson1,Dyson2}. 
The notion of the hierarchical Laplacian $L$, which is closely related to 
Dyson's model, was studied in several mathematical papers 
\cite{Bovier,Kvitchevski1,Kvitchevski2,Kvitchevski3,Molchanov,AlbeverioKarwowski,PearsonBellisard,Kozyrev}. 
These papers contain some basic information about $L$ 
(the spectrum, Markov semigroup, resolvent, etc...) 
in the case when the state space $(X,d,m)$ is discrete and 
satisfies some symmetry conditions such as homogeneity and self-similarity. 
As was noticed in \cite{Figa-Tal1,Figa-Tal2}, these symmetry
conditions imply that $(X,d,m)$ can be identified with some
discrete infinitely generated Abelian group $G$ equipped with a translation
invariant ultrametric and measure. The Markov semigroup $\{P^{t}=\exp (-tL)\}_{t \geq 0}$
acting on $L^{2}(G,m)$ becomes symmetric, translation invariant, and
isotropic. In particular, $\mathsf{Spec}(L)$ is pure point and all
eigenvalues have infinite multiplicity.

The main goal of the papers mentioned above was to study the corresponding
Anderson Hamiltonian $H=L+V$; the hierarchical Laplacian $L$ plus random
potential $V$. There was a hope to detect for such operators the spectral
bifurcation from the pure point spectrum to the continuous one, i.e. to
justify the famous Anderson conjecture. Unfortunately, the opposite result was true. 
Under mild technical conditions, the hierarchical Anderson
Hamiltonian has a pure point spectrum\textemdash the phenomenon of localization 
(see \cite{PearsonBellisard,Krutikov1,Krutikov2,Kvitchevski2}). 
Moreover, the local statistics of the spectrum of $H$ are Poissonian \cite{Kvitchevski3}, 
which is always deemed a manifestation of the spectral localization \cite{AizMol,Minami}.

We study a class of operators introduced in \cite{BGMS}, the random
hierarchical Laplacians, which demonstrate several new
spectral effects. The spectra of such operators is still pure point (with
compactly supported eigenfunctions), but in contrast to the deterministic
case there exists a continuous density of states. This density detects the
spectral bifurcation from the pure point spectrum to the continuous one. The
eigenvalues \emph{form locally} a Poisson point process with intensity
given by the density of states. That is, the empirical point process defined in
terms of the eigenvalues is approximated by a Poisson point
process. In this paper we provide an error bound for the Poisson approximation 
in terms of the total variation distance; see Theorem~\ref{thm: main}. 
We prove Theorem~\ref{thm: main} by applying a result due to 
R. Arratia, L. Goldstein and L. Gordon in \cite{AGG}, 
which studies Poisson approximations using the Chen-Stein method.

Throughout the paper we assume that $(X,d)$ is a locally compact, non-compact, and
separable ultrametric space. Recall that a metric $d$ is called an 
\emph{ultrametric} if it satisfies the ultrametric inequality 
\begin{equation}
d(x,y)\leq \max \{d(x,z),d(z,y)\},
\end{equation}
which is stronger than the usual triangle inequality. We also assume that the 
ultrametric $d$ is \emph{proper}, i.e.\ each closed $d$-ball is a compact set.

Let $m$ be a Radon measure on $(X,d)$ such that
\begin{itemize}
\item $m(B)>0$ for each ball $B$ of positive diameter.

\item $m(\{x\})=0$ if and only if $x$ is not an isolated point.
\end{itemize}
Let $\mathcal{B}$ be the set of all balls having positive measure. Our
assumptions imply that the set $\mathcal{B}$ is countable. Let 
$C:\mathcal{B}\rightarrow (0,\infty )$ be a function which satisfies the
following assumptions:
\begin{equation*}
\lambda (B):=\sum\limits_{T\in \mathcal{B}:\text{ }B\subseteq T}C(T)<\infty
\end{equation*}
and
\begin{equation*}
\sup \{\lambda (B):B\in \mathcal{B}\text{ and }x\in B\}=\infty
\end{equation*}%
holds for all $B\in \mathcal{B}$ and non-isolated $x\in X$. As was shown
in \cite{BendikovKrupski}, the set of functions $C$ which satisfy the above
assumptions is quite rich.

Let $\mathcal{D}$ be the set of all locally constant functions having
compact support. Given the data $(X,m,C)$ one defines (pointwise) the
hierarchical Laplacian 
\begin{equation}
L_{C}f(x):=\sum\limits_{B\in \mathcal{B}:\text{ }x\in B}C(B)
\left( f(x)-\frac{1}{m(B)}\int_{B}fdm\right),\quad f\in \mathcal{D}.
\label{hlaplacian}
\end{equation}
The operator $(L_{C},\mathcal{D})$ acts in $L^{2}=L^{2}(X,m)$, is symmetric,
and admits a complete system of eigenfunctions $\{f_{B,B^{\prime }}\}$, defined as
\begin{equation}
f_{B,B^{\prime }}
=\frac{1}{m(B)}\mathbf{1}_{B}-\frac{1}{m(B^{\prime })}\mathbf{1}_{B^{\prime }},
\label{eigenfunction}
\end{equation}
where $B\subset B^{\prime }$ run over all nearest neighboring balls in $\mathcal{B}$. 
The family $\{f_{B,B^{\prime }}\}$ is called the Haar system associated to $(X,d,m)$. 
The eigenvalue $\lambda (B^{\prime })$ corresponding to the Haar function $f_{B,B^{\prime }}$ 
is computed as
\begin{equation}
\lambda (B^{\prime })=\sum\limits_{T\in \mathcal{B}:\text{ }B^{\prime}\subseteq T}C(T).  
\label{eigenvalue}
\end{equation}
Since the Haar system $\{f_{B,B^{\prime }}\}$ is complete, we conclude that 
$(L_{C},\mathcal{D})$ is an essentially self-adjoint operator in $L^{2}$. By a slight abuse of notation, 
we write $(L_{C},\mathsf{Dom}_{L_{C}})$ for its unique self-adjoint extension. 
For a more detailed discussion we refer the reader to \cite{BendikovKrupski}. 
See also the related papers \cite{BGP} and \cite{BGPW}.

Observe that to define the functions $C(B)$, $\lambda (B)$, and therefore the
operator $(L_{C},\mathsf{Dom}_{L_{C}})$, we do not need to specify the
ultrametric $d$. What is needed is the family of balls $\mathcal{B}$, which
can be identical for two different ultrametrics $d$ and $d^{\prime}$. 
In particular, given the data $(X,\mathcal{B},m)$, and choosing the function 
\begin{equation*}
C(B)=\frac{1}{m(B)}-\frac{1}{m(B^{\prime })},
\end{equation*}
where $B\subset B^{\prime }$ are any two nearest neighbor balls in 
$\mathcal{B}$, we obtain the hierarchical Laplacian 
$(L_{C},\mathsf{Dom}_{L_{C}})$ satisfying 
\begin{equation*}
\lambda (B)=\frac{1}{m(B)}.
\end{equation*}
We refer to $(L_{C},\mathsf{Dom}_{L_{C}})$ as the standard
hierarchical Laplacian associated with the data $(X,\mathcal{B},m)$.

\subsection*{Notation}
\label{subsec:notation}
$ $

Denoting the positive integers by $\N$, we write $\N_{0}=\N\cup\{0\}$ and $\N_{\geq 2}=\N\setminus\{1\}$. 
For a function $f: \R \to \R$, define $\norm{f} = \sup_{x \in \R} \abs{f(x)}$. 
For a real-valued random variable $X$, we let $\mathscr{L}(X)$ denote the law of that random variable. 
The law of a Poisson random variable with mean $\lambda$ is denoted $\Poi(\lambda)$. 
The \emph{total variation distance} between probability measures $\mu$ and $\nu$ on $\R$ 
with the Borel $\sigma$-field $\mathscr{B}(\R)$ is defined as 
\[
\tv{\mu - \nu}=\sup_{A\in\mathscr{B}(\R)}\abs{\mu(A)-\nu(A)}.
\]
When $\mu$ and $\nu$ have countable support $S$, total variation is half the $L^{1}$ distance between the 
associated mass functions:
\[
\tv{\mu - \nu}=\frac{1}{2}\sum_{x\in S}\abs{\mu\left(\{x\}\right) - \nu\left(\{x\}\right)}.
\]

\subsection*{Outline}
\label{subsec:outline}
$ $

In Section~\ref{sec:homogeneous laplacian} we specify some spectral properties of the 
hierarchical Laplacian $L_{C}$ assuming that the ultrametric measure space where it acts 
and the Laplacian itself both satisfy certain symmetry conditions (homogeneity and self-similarity).
As an example, we consider the operator $\mathfrak{D}^{\alpha }$ of the 
$p$-adic fractional derivative of order $\alpha >0$. This operator is related to
the concept of $p$-adic Quantum Mechanics and was introduced by V.S. Vladimirov; see \cite{Vladimirov}, 
\cite{VladimirovVolovich} and \cite{Vladimirov94}. $\mathfrak{D}^{\alpha }$ 
is a homogeneous hierarchical Laplacian which acts in $L^{2}(X,m)$ where $X=\Q_{p}$ is the field of 
$p$-adic numbers and $m$ is its Haar measure. It is translation invariant and the set 
$\mathsf{Spec}(\mathfrak{D}^{\alpha })$ consists of the eigenvalues 
$\{p^{\alpha k}\}_{k\in\Z}$, each having infinite multiplicity, and contains $0$ as 
an accumulation point.

In Section~\ref{sec:random perturbations}, given a homogeneous Laplacian $L_{C}$ and a family 
$\{\eps (B)\}_{B\in \mathcal{B}}$ of symmetric i.i.d. random variables, we define a
perturbed function $\mathcal{C}(B)=C(B)\left(1+\eps(B)\right)$ and a perturbed Laplacian 
$\mathcal{L}=L_{\mathcal{C}}$. For each $\omega$, the operator 
$\mathcal{L}(\omega)=L_{\mathcal{C}(\omega)}$ is a hierarchical Laplacian, 
whence it has a pure point spectrum. On the other hand, for some $\omega$ it may fail to be homogeneous. 
In particular, the set of its eigenvalues may form dense subsets in certain intervals \cite{BendikovKrupski}. 
Following \cite{BGMS} we recall some basic notions and properties associated to the perturbed 
operator $\mathcal{L}$.

Our main result appears in Section~\ref{sec:poisson convergence}. 
We fix a horocycle $H$ (the set of all balls having the same diameter) and 
define the empirical process 
\begin{equation*}
M_{O}=\sum_{B\subseteq O:B\in H}\delta _{\lambda(B)}, \text{ }O\in \mathcal{B},
\end{equation*}
associated with eigenvalues $\lambda(B)$ of the perturbed operator $\mathcal{L}$.
Here $\delta_{a}$ denotes the point mass at $a$, so that for any interval $I$, 
$\delta_{\lambda(B)}(I)=1\left\{\lambda(B)\in I\right\}$ is the indicator that $\lambda(B)$ belongs 
to $I$ and $M_{O}(I)$ is the number of $\lambda(B)$, $B\subseteq O$, which fall in $I$.
Under mild assumptions, it was shown in \cite{BGMS} that for an appropriate sequence of intervals 
$I_{O}$, when $\E[M_{O}(I_{O})]$ converges to some value $\lambda>0$ as $O\rightarrow\varpi$, 
the random variable $M_{O}(I_{O})$ converges in law to a Poisson random variable $\Poi(\lambda)$. 
In Theorem~\ref{thm: main}, we give an upper bound on the total variation distance 
\begin{align*}
\tv{\mathscr{L}(M_{O}(I_{O}))- \Poi(\lambda)}.
\end{align*}
Random perturbations of the operator $\mathfrak{D}^{\alpha }$ 
of the $p$-adic fractional derivative of order $\alpha >1$ from Section~\ref{sec:homogeneous laplacian} 
provide an example where our result can be applied.

Section~\ref{sec:bounds_proof} provides a proof of Theorem~\ref{thm: main}. The proof is based on a 
result by R. Arratia, L. Goldstein, and L. Gordon \cite{AGG}, who study Poisson approximation of random 
variables using the Chen-Stein method.

$ $

\begin{acknowledgement}
This work was initiated at Cornell University. We are grateful to Larry Goldstein, Michael
Nussbaum, and Laurent Saloff-Coste for fruitful discussions and valuable comments. 
We also thank the organizers of the 2015 Workshop on New Directions in Stein's Method 
where part of this research was carried out.
\end{acknowledgement}

\section{Homogeneous Laplacian}
\label{sec:homogeneous laplacian}
\setcounter{equation}{0}

In this section we specify some spectral properties of the hierarchical Laplacian 
$L_{C}$ assuming that the (non-compact) ultrametric measure space $(X,d,m)$ 
where it acts and the Laplacian by itself satisfy the following symmetry conditions:
\begin{itemize}
\item The group of isometries of $(X,d)$ acts transitively on $X$.

\item Both the reference measure $m$ and the function $C(B)$ are
invariant under the action of isometries.
\end{itemize}
When the ultrametric measure space $(X,d,m)$ and hierarchical Laplacian 
$L_{C}$ satisfy both of these conditions, we call them \emph{homogeneous}.
The first assumption implies that $(X,d)$ is either discrete or perfect.
Basic examples that we have in mind are
\begin{itemize}
\item $X=\Z_{p}$ -- the compact group of $p$-adic integers.

\item $X=\Q_{p}$ -- the ring of $p$-adic numbers.\footnote{Though we sometimes refer to the field 
$\Q_{p}$, the assumption that $p$ is prime is never needed in this paper.}

\item $X=\Q_{p}/\Z_{p}\cong \Z(p^{\infty })$ --
the multiplicative group of $p^{n}$th roots of unity, $n\in\N$, considered in the discrete topology.

\item $X=S_{\infty}$ -- the infinite symmetric group.
\end{itemize}

As was noticed in \cite{Figa-Tal1} and \cite{Figa-Tal2}, our assumptions
imply that the measure space $(X,m)$ can be identified with a locally
compact totally disconnected Abelian group $G$ equipped with its Haar
measure. Notice that the group $G$ is not unique. As a possible choice of
$G$ when, for instance, $X$ is \emph{perfect}, one can take the Abelian group
\begin{equation}
G=\mathop{\mathrm{limind}}_{\ell\rightarrow -\infty}
\left(\prod\limits_{k\geq \ell}\Z(n_{k})\right),  
\label{group G-X}
\end{equation}
where $\Z(n_{k})$ are (nontrivial) cyclic groups and $\{n_{k}\}_{k\in \Z}$ 
is an appropriately chosen double sided sequence of integers. The canonical ultrametric 
structure on $G$ is defined by the descending sequence of compact subgroups
\begin{equation*}
G_{\ell}=\prod\limits_{k\geq \ell}\Z(n_{k}).
\end{equation*}
Namely, the groups $G_{\ell}$ and their cosets $\{a+G_{\ell}\}$
form the collection $\mathcal{B}$ of all clopen balls.

There is a natural ultrametric structure associated to the chain of (small)
subgroups $G_{\ell}$ of $G$. One defines the \emph{absolute value} 
$\left\vert a\right\vert$ for the elements $a$ of $G$,
\begin{equation*}
\left\vert a\right\vert =\left\{ 
\begin{array}{ccc}
0 & \text{if} & a=0, \\ 
m(G_{\ell}) & \text{if} & a\in G_{\ell}\setminus G_{\ell+1}.
\end{array}
\right.
\end{equation*}
The absolute value $\left\vert a\right\vert$ satisfies the ultrametric inequality
\begin{equation*}
\left\vert a+b\right\vert \leq \max \{\left\vert a\right\vert ,\left\vert b\right\vert \}.
\end{equation*}
It is also clear that $\left\vert a\right\vert =\left\vert -a\right\vert$
and that $d(a,b):=\left\vert a-b\right\vert $ is an ultrametric that gives 
$(G,m)$ the structure of a homogeneous ultrametric measure space as defined
above. In particular, for any ball $B$ we have 
\begin{equation*}
m(B)=\diam(B).
\end{equation*}
Choosing the Haar measure $m$ such that $m(G_{0})=1$, we compute $m(G_{\ell})$
for any $\ell\neq 0$ as follows:
\begin{equation*}
m(G_{\ell})=\left\{ 
\begin{array}{ccc}
n_{\ell}...n_{-1} & \text{if} & \ell<0, \\ 
\left( n_{\ell-1}...n_{0}\right) ^{-1} & \text{if} & \ell>0.
\end{array}
\right. 
\end{equation*}
Recall that in the classical setting $X=\Q_{p}$, we chose 
$G=\bigcup_{\ell\in \Z}G_{\ell}$, $G_{\ell}=p^{\ell}\Z_{p}$, and set 
\begin{equation*}
\left\vert a\right\vert =p^{-n(a)},\text{ where }n(a)=\max \{\ell:a\in G_{\ell}\}.
\end{equation*}
In this case, the quantity $\left\vert a\right\vert $ becomes a pseudonorm:  
$\left\vert ab\right\vert \leq \left\vert a\right\vert \left\vert b\right\vert$.
It is a norm ($\left\vert ab\right\vert =\left\vert a\right\vert\left\vert b\right\vert$) 
if $p$ is a prime number\textemdash the basic property in $p$-adic analysis and its applications. 

We recall that to any given ultrametric space $(X,d)$ one associates in a
standard fashion a tree $\mathcal{T}(X)$ (see Figure~\ref{fig5}). The vertices of the
tree are metric balls, and thus in our case they are the cosets 
$\{a+G_{\ell}:a\in G,\ell\in \Z\}$. The ascending sequence of subgroups 
$\{G_{\ell}:l=0,-1,-2,...\}$ identifies a special boundary point, which we
denote $\varpi$. With respect to this special point we consider the
horocycles of the tree. A \emph{horocycle} in this case is the set of
vertices consisting of the balls of a given diameter; in other words, the
cosets relative to the same subgroup $G_{\ell}$. Thus, for fixed $\ell$, the
horocycle is $H_{\ell}=\{a+G_{\ell}:a\in G\}$. The boundary $\partial \mathcal{T}(G)$
can be identified with the one-point compactification $G\cup \{\varpi\}$ of 
$G$. We refer to \cite{Figa-Tal1}, \cite{Figa-Tal2}, and \cite{BendikovKrupski} 
for a complete treatment of the association between an
ultrametric space and the tree of its metric balls. The most complete source
for the basic definitions related to the geometry of trees is \cite{Cartier1972}; see also \cite{Woess}.

\begin{figure}
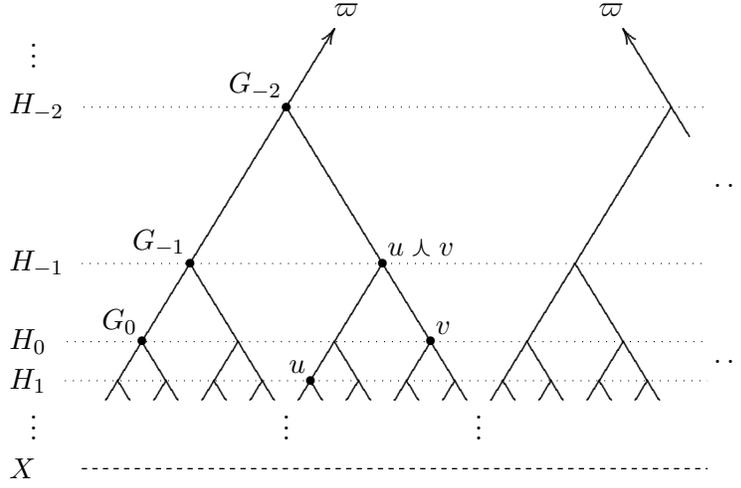

$$
\beginpicture

\setcoordinatesystem units <.8mm,1.3mm>

\setplotarea x from -10 to 104, y from -5 to 46

\arrow <6pt> [.2,.67] from 2 2 to 40 40

\plot 32 32 62 2 /

 \plot 16 16 30 2 /

 \plot 48 16 34 2 /

 \plot 8 8 14 2 /

 \plot 24 8 18 2 /

 \plot 40 8 46 2 /

 \plot 56 8 50 2 /

 \plot 4 4 6 2 /

 \plot 12 4 10 2 /

 \plot 20 4 22 2 /

 \plot 28 4 26 2 /

 \plot 36 4 38 2 /

 \plot 44 4 42 2 /

 \plot 52 4 54 2 /

 \plot 60 4 58 2 /

\arrow <6pt> [.2,.67] from 99 29 to 88 40

 \plot 66 2 96 32 /

 \plot 70 2 68 4 /

 \plot 74 2 76 4 /

 \plot 78 2 72 8 /

 \plot 82 2 88 8 /

 \plot 86 2 84 4 /

 \plot 90 2 92 4 /

 \plot 94 2 80 16 /

\setdots <3pt>
\putrule from -4.8 4 to 102 4
\putrule from -4.5 8 to 102 8
\putrule from -2 16 to 102 16
\putrule from -1.7 32 to 102 32

\setdashes <2pt> \linethickness=.5pt
\putrule from -2 -5 to 102 -5

\put {$\vdots$} at 32 0
\put {$\vdots$} at 64 0

\put {$\dots$} [l] at 103 6
\put {$\dots$} [l] at 103 24

\put {$H_{-2}$} [l] at -14 32
\put {$H_{-1}$} [l] at -14 16
\put {$H_0$} [l] at -14 8
\put {$H_1$} [l] at -14 4
\put {$X$} [l] at -14 -5
\put {$\vdots$} at -10 0
\put {$\vdots$} [B] at -10 36

\put {$\scriptstyle\bullet$} at 8 8
\put {$G_0$} [rb] at 7.2 8.8
\put {$\scriptstyle\bullet$} at 16 16
\put {$G_{-1}$} [rb] at 15.2 16.8
\put {$\scriptstyle\bullet$} at 32 32
\put {$G_{-2}$} [rb] at 31.2 32.8
\put {$\scriptstyle\bullet$} at 36 4 
\put {$u$} [rb] at 35.2 4.8
\put {$\scriptstyle\bullet$} at 56 8 
\put {$v$} [lb] at 56.8 8.8
\put {$\scriptstyle\bullet$} at 48 16 
\put {$u \curlywedge v$} [lb] at 48.8 16.8

\put {$\varpi$} at 42 42
\put {$\varpi$} at 86 42

\endpicture
$$
\caption{Tree of balls $\mathcal{T}(X)$ with forward degree $n_l=2$.}
\label{fig5}
\end{figure}

Let $L_{C}$ be a homogeneous hierarchical Laplacian.
Thanks to the homogeneity property, $C(A)=C(B)$ for any two balls which
belong to the same horocycle $H$. The same of course is true for the
eigenvalues $\lambda (A)$ and $\lambda (B)$. We set $C_{H}=C(B)$ and 
$\lambda _{H}=\lambda (B)$ for any ball $B\in H$. When $H=H_{k}$ we will
also write $c_{k}=C_{H_{k}}$ and $\lambda _{k}=\lambda _{H_{k}}$. 
In this notation 
\begin{equation*}
\lambda _{k}=\sum\limits_{\ell\leq k}c_{\ell}.
\end{equation*}
Each ball $B\in H_{k}$ has $n_{k}$ nearest neighbors $B_{i}\subset B$.
The system of eigenfunctions $\{f_{B_{i},B}:i=1,...,n_{k}\}$ corresponding
to $\lambda (B)$ is not orthogonal: Its linear span $\mathcal{H}(B)\subset L^{2}$ 
has dimension $n_{k}-1$. For two different balls $S$ and $T$, the
eigenspaces $\mathcal{H}(S)$ and $\mathcal{H}(T)$ are orthogonal. It follows
that the eigenspace $\mathcal{H}_{k}$ corresponding to the horocycle $H_{k}$
(equivalently, to the eigenvalue $\lambda _{k}$) is of the form
\begin{equation*}
\mathcal{H}_{k}=\bigoplus_{B\in H_{k}}\mathcal{H}(B).
\end{equation*}
The system of eigenfunctions $\{f_{B_{i},B}\}_{B\in \mathcal{B}}$ is complete, whence
\begin{equation*}
\bigoplus_{k\in \Z}\mathcal{H}_{k}=L^{2}(G,m).
\end{equation*}

Among the variety of homogeneous hierarchical Laplacians $L_{C}$ on $(G,d,m)$,
we would like to mention a one-parameter family $\{\mathfrak{B}^{\alpha}\}_{\alpha >0}$. 
The hierarchical Laplacian $\mathfrak{B}^{\alpha }$ is defined by the function
\begin{equation}
C^{\alpha }(B)=\left(m(B)\right) ^{-\alpha }-\left(m(B^{\prime })\right)^{-\alpha },  
\label{C-B-fractur}
\end{equation}
where $B\subset B^{\prime }$ are nearest neighbor balls. Thus for any
$B\in \mathcal{B}$, the eigenvalue $\lambda^{\alpha}(B)$ of 
$\mathfrak{B}^{\alpha }$ corresponding to $B$ is
\begin{equation*}
\lambda ^{\alpha}(B)=\left(m(B)\right)^{-\alpha}.
\end{equation*}
We recall from \cite{BGPW} that the set $\mathcal{D}$ of compactly supported
locally constant functions is in the domain of the operator $\mathfrak{B}^{\alpha}$. 
It is remarkable, although not difficult to prove, that the following properties hold: 
\begin{equation*}
\mathfrak{B}^{\beta }:\mathcal{D}\rightarrow\mathsf{Dom}(\mathfrak{B}^{\alpha }),
\end{equation*}
and on $\mathcal{D}$,
\begin{equation*}
\mathfrak{B}^{\alpha }\circ \mathfrak{B}^{\beta }=\mathfrak{B}^{\alpha+\beta }
\text{ and }(\mathfrak{B}^{\alpha })^{\beta }=\mathfrak{B}^{\alpha\beta }.
\end{equation*}
Moreover, when $G=\Q_{p}$, the operator 
$\mathfrak{D}^{\alpha}=p^{\alpha}\mathfrak{B}^{\alpha}$ 
is the fractional derivative operator of order $\alpha$ as defined and studied via Fourier transform
in \cite{Taibleson75}, \cite{Vladimirov}, \cite{Vladimirov94} and \cite{Kochubey20004}:
\begin{equation*}
\widehat{\mathfrak{D}^{\alpha}f}(\xi)
=\left\vert \xi \right\vert ^{\alpha}\widehat{f}(\xi),\text{ \ }\xi \in \Q_{p},
\end{equation*}
and 
\begin{equation}
\mathfrak{D}^{\alpha}f(x)=\frac{p^{\alpha}-1}{1-p^{-\alpha -1}}
\int_{G}\frac{f(x)-f(y)}{\left\vert x-y\right\vert ^{1+\alpha }}dm(y).
\label{frac-deriv}
\end{equation}

Note that similar identifications (based on cyclic groups $\Z(n)$
as building blocks) can be carried over when $(X,d)$
is infinite and discrete. 
For instance, the infinite (non-Abelian) symmetric group $S_{\infty}$
equipped with its canonical ultrametric structure defined by the family 
$\{S_{n}\}$ of its finite symmetric subgroups can be identified (as an 
ultrametric measure space) with the discrete Abelian group 
$G=\bigoplus_{\ell>1}\Z(\ell)$. The group $G$ is equipped with its
canonical ultrametric structure defined by the family $\{G_{n}\}$ of its
finite subgroups $G_{n}=\prod_{\ell=2}^{n}\Z(\ell)$.

\section{Random perturbations}
\label{sec:random perturbations}
\setcounter{equation}{0}

Let $(X,d,m)$ be a non-compact homogeneous ultrametric space. 
Let $L_{C}$ be the homogeneous hierarchical Laplacian acting on $X$ 
and defined by the function $C$. Let $\{\eps(B)\}_{B\in\mathcal{B}}$ 
be a family of symmetric i.i.d. random variables defined on the probability space 
$(\Omega,\mathcal{F},\P)$, and taking values in some small interval 
$[-\epsilon ,\epsilon ]\subset \left(-1,1\right)$. We define the perturbed function 
$\mathcal{C}$ and the perturbed hierarchical Laplacian $\mathcal{L}$ as 
\begin{equation*}
\mathcal{C}(B)=C(B)(1+\eps (B)),
\end{equation*}
and
\begin{equation*}
\mathcal{L}f(x)=\sum_{B\in \mathcal{B}:\text{ }x\in B}
\mathcal{C}(B)\left( f(x)-\frac{1}{m(B)}\int_{B}fdm\right).
\end{equation*}
Evidently, $\mathcal{L}(\omega)$ may well be non-homogeneous for some $\omega\in\Omega$. 
Still it has a pure point spectrum for all $\omega$, but the structure of the closed set 
$\mathsf{Spec}(\mathcal{L}(\omega))$ can be quite complicated. See \cite{BendikovKrupski} 
for various examples.

\subsection*{Two stationary families.}
\label{subsec:stationary families}
$ $

We study the eigenvalues of the perturbed operator. Because of the homogeneity assumption 
we can identify its eignevalues with a certain stationary family of random variables. 
This observation is crucial in our analysis.

Let us fix a horocycle $H=H_{\ell}$ for some $\ell\in\Z$ and let 
$\lambda_{H}=\lambda _{\ell}$ be the eigenvalue of the homogeneous Laplacian $L_{C}$
corresponding $H$. For a given $B\in H$, let $\{B_{k}\}_{k\leq\ell}$ be the unique 
geodesic path in $\mathcal{T}(X)$ from $\varpi$ to $B$. 
The eigenvalue $\lambda(B)$ of the perturbed operator $\mathcal{L}$ is
\begin{eqnarray*}
\lambda(B) &=&\sum_{k\leq \ell}\mathcal{C}(B_{k})
=\sum_{k\leq \ell}c_{k}\left( 1+\eps(B_{k})\right) \\
&=&\lambda _{\ell}\left( 1+\sum\limits_{k\leq \ell}\alpha_{k}\eps(B_{k})\right) 
:=\lambda _{\ell}\left( 1+U(B)\right),
\end{eqnarray*}
where $\alpha_{k}=c_{k}/\lambda _{\ell}$, and
\begin{equation}
U(B)=\sum_{k\leq \ell}\alpha_{k}\eps(B_{k}).
\label{U-H}
\end{equation}
Notice that $\sum_{k\leq \ell}\alpha_{k}=1$ and that $\{U(B)\}_{B\in H}$ are
(dependent) identically distributed symmetric random variables taking values
in some symmetric interval $I\subset \left(-1,1\right)$.

As the horocycle $H=H_{\ell}$ is fixed, it is useful to identify the balls $B\in H$ with 
elements $g\in G$ of the (discrete!) Abelian group 
$G=\bigoplus_{k<\ell}\Z(n_{k})$. With such an identification in mind 
it is now straightforward to show that the family of random variables 
$\{U(g)\}_{g\in G}=\{U(B)\}_{B\in H}$ (respectively, 
$\{\lambda(g)\}_{g\in G}=\{\lambda(B)\}_{B\in H}$) is stationary. 
That is, for any $g,g_{1},...,g_{s}$ in $G$, 
\begin{equation*}
\left\{U(g+g_1),\ldots,U(g+g_s)\right\}\overset{d}{=}\left\{U(g_1),\ldots,U(g_s)\right\},
\end{equation*}
respectively,
\begin{equation*}
\left\{\lambda(g+g_{1}),\ldots,\lambda(g+g_{s})\right\}\overset{d}{=}
\left\{\lambda(g_{1}),\ldots,\lambda(g_{s})\right\}.
\end{equation*}
For the general theory of stationary processes indexed by $\Z$ or $\R$ 
we refer to \cite{IbragimovLinnik} and \cite{LeadbetterLingremRootzen}.

\subsection*{The integrated density of states}
\label{subsec:ids}
$ $

Let $O\in\mathcal{B}$ be an ultrametric ball. 
We fix a horocycle $H$ and define the normalized empirical process 
\begin{equation}
\mathcal{M}_{HO}
=\frac{1}{\left\vert \mathcal{B}_{HO}\right\vert}\sum_{B\in\mathcal{B}_{HO}}\delta_{\lambda(B)},
\label{Empirical distribution}
\end{equation}
where $\mathcal{B}_{HO}=\{B\in \mathcal{B}:B\in H\text{ and }B\subset O\}$. As was
shown in \cite{BGMS}, there exists a probability measure $\mathcal{M}_{H}$ 
such that $\P$-a.s. 
\begin{equation}
\mathcal{M}_{HO}\rightarrow \mathcal{M}_{H}\text{ weakly as } O\rightarrow \varpi. 
\label{M-H-measure}
\end{equation}

The measure $\mathcal{M}_{H}$ is called the \emph{integrated density of states} 
(corresponding to the horocycle $H$). If there exists a measurable function
 $\mathfrak{m}_{H}:\R\rightarrow\R$ such that for any interval $I$
\begin{equation}
\mathcal{M}_{H}(I)=\int_{I}\mathfrak{m}_{H}(\tau )d\tau , 
\label{def=ids}
\end{equation}
the function $\mathfrak{m}_{H}(\tau )$ is called the density of states 
(corresponding to the horocycle $H$).
The question of whether $\mathfrak{m}_{H}$ exists, is
continuous, belongs to the class $C^{\infty }$, etc... is basic in various applications; 
see for instance \eqref{eq:poisconv} in Section~\ref{sec:poisson convergence}. 

Recall that for any $B\in H$,
\begin{equation*}
\lambda(B)=\lambda _{H}\left(1+U(B)\right),
\end{equation*}
where the $U(B)$ are identically distributed (dependent) symmetric random variables. 
Let $\mathcal{N}_{H}$ denote their common distribution. 
It turns out that the normalized empirical process 
\begin{equation*}
\mathcal{N}_{HO}=\frac{1}{\left\vert \mathcal{B}_{HO}\right\vert}
\sum_{B\in \mathcal{B}_{HO}}\delta _{U(B)}
\end{equation*}
converges to $\mathcal{N}_{H}$ weakly $\P$-a.s. Evidently, the probability measures 
$\mathcal{M}_{H}$ and $\mathcal{N}_{H}$ are related by the affine transformation 
\begin{equation*}
\mathcal{N}_{H}=\mathcal{M}_{H}\circ \vartheta,\text{ \ }
\vartheta :\tau\rightarrow \lambda _{H}\tau +\lambda _{H}.
\end{equation*}
In particular, $\mathcal{M}_{H}$ is absolutely continuous w.r.t. Lebesgue
measure if and only if $\mathcal{N}_{H}$ is. The measure $\mathcal{N}_{H}$
has a remarkable feature -- it belongs to the class $\mathfrak{J}$ of probability measures 
which can be represented as the distribution of some random variable $U$ of the form
\begin{equation}
U=\sum_{k\geq 0}b_{k}\eps_{k},  
\label{Class I}
\end{equation}
where $\{\eps_{k}\}_{k\geq 0}$ are symmetric i.i.d. random variables and 
$b_{k}>0$ satisfies $\sum b_{k}=1$.

Various properties of $\mathfrak{J}$-distributions (infinite convolutions)
have been studied by many authors since the 1930's. See e.g. \cite{Lukacs}, 
\cite{GrahamMcGehee}, \cite{Solomyak}, \cite{PersSolomyak}, 
\cite{PersSchlagSolomyak}, \cite{Solomyak1} and references therein. 
We would like to mention three remarkable properties of $\mathfrak{J}$-distributions. 
The first one is due to L\'{e}vy (1937) and the second due to
Jessen and Wintner (1935), see e.g. \cite{Lukacs}, Thm. 3.7.6 and 3.7.7
respectively. For the third property, we refer to \cite{BGMS}.

\begin{itemize}
\item Each $\mathfrak{I}$-distribution $\mathcal{N}$ in its Lebesgue
decomposition contains no discrete component.

\item Assume that the $\{\eps_{k}\}$ from (\ref{Class I}) are discrete. 
Then the measure $\mathcal{N}$ is either singular or it is absolutely
continuous w.r.t. Lebesgue measure.

\item Assume that the common characteristic function $\phi$ of the i.i.d. 
$\{\eps_{k}\}$ tends to zero at infinity and that the coefficients satisfy 
$b_{k}\geq C\exp (-Dk)$, for some $C,D>0$ and all $k$. Then $\mathcal{N}$
admits a $C^{\infty }$-density w.r.t. Lebesgue measure.
\end{itemize}

Various examples of singular random variables $\eps$ having characteristic functions $\phi$ 
which tend to zero at infinity are given in Section 3 of \cite{Lukacs} and also in Sections 6 and 7 
of \cite{GrahamMcGehee}. 
Here is an example due to Kerschner (1936): Suppose $a$ is a rational number such that 
$0<a<1/2$, and $a\neq 1/n$ for any integer $n\geq 3$. Then
\begin{equation*}
\phi(x)=\prod_{k=1}^{\infty }\cos \left(a^{k}x\right)
\end{equation*}
is the characteristic function of a singular symmetric $\mathfrak{J}$-distribution satisfying
\begin{equation}
\left\vert \phi(x)\right\vert \leq \frac{c}{(\log |x|)^{\gamma}}\text{ at }\infty  
\label{phi at infty}
\end{equation}
for some $\gamma ,c>0$.

\section{The Poisson Convergence}
\label{sec:poisson convergence}
\setcounter{equation}{0}

Fix a horocycle $H$. The eigenvalues $\lambda(B)$ can be represented by the empirical process
\begin{equation*}
M_{O}=\sum_{B\in \mathcal{B}_{HO}}\delta_{\lambda(B)}.
\end{equation*}
The \emph{intensity measure} $\mu_{O}(I)$ gives the expected number of 
$\lambda(B)$, $B\in\mathcal{B}_{HO}$, which fall in the interval $I$.
It is computed as
\begin{equation*}
\mu _{O}(I)=\E[M_{O}(I)]
=\left\vert \mathcal{B}_{HO}\right\vert\P\left\{\lambda(B)\in I \right\}.
\end{equation*}
Recall that the right-hand side of the above equality does not depend on 
$B\in \mathcal{B}_{HO}$. We fix numbers $c,\tau _{0}>0$ and consider a
small interval 
\begin{equation}
I_{O}=\Big\{\tau:\left\vert \tau -\tau _{0}\right\vert 
\leq \frac{c}{2\left\vert \mathcal{B}_{HO}\right\vert }\Big\}.  
\label{Interval}
\end{equation}
Assuming that the density of states $\mathfrak{m}(\tau):=\mathfrak{m}_{H}(\tau)$
(as defined in \eqref{def=ids}) exists and is continuous at $\tau_{0}$, 
and that $\mathfrak{m}\left(\tau _{0}\right)>0$, we obtain
\begin{equation}
\lim_{O\rightarrow\varpi}\mu_{O}(I_{O})=c\mathfrak{m}\left( \tau _{0}\right)
=:\lambda _{c}>0.  
\label{intensivity}
\end{equation}
If the $\lambda(B)$, $B\in \mathcal{B}_{HO}$, were i.i.d., then 
\eqref{intensivity} would yield the classical
convergence of the random variables $M_{O}(I_{O})$ to the
Poisson distribution $\Poi(\lambda)$ with intensity 
$\lambda:=\lambda _{c}$. More precisely, in the i.i.d.\ case we would then have 
(see \cite{LeadbetterLingremRootzen}, \cite{Diaconis}, \cite{AGG}) 
\begin{equation*}
\left\Vert \mathscr{L}(M_{O}(I_{O}))-\Poi(\lambda_{O})\right\Vert _{TV}
\leq \frac{\min (\lambda_{O},\lambda_{O}^{2})}{2\left\vert\mathcal{B}_{HO}\right\vert}
\end{equation*}
where $\lambda_{O}=\mu_{O}(I_{O})\rightarrow\lambda$ as $O\rightarrow\varpi$.
However, in our case the $\lambda(B)$, $B\in\mathcal{B}_{HO}$, are 
\emph{dependent} random variables, whence the classical theory does not apply
directly and needs some justifications and complements. 

In the course of our study we assume that 
\begin{equation}
1/\kappa \leq C(B)m(B)^{\delta}\leq \kappa,
\end{equation}
equivalently,
\begin{equation}
1/2\kappa \leq \lambda(B)m(B)^{\delta}\leq 2\kappa,
\label{delta condition}
\end{equation}
holds for all $B\in\mathcal{B}$ and some $\delta >1,\kappa >0$.
Assume also that the common law $\mathscr{L}(\eps)$ of i.i.d. $\eps(B)$ admits a bounded density. 
Then it was shown in \cite{BGMS} that
\begin{equation}
\mathscr{L}(M_{O}(I_{O}))\rightarrow \Poi(\lambda )\text{ weakly as \ } O\rightarrow \varpi. 
\label{eq:poisconv}
\end{equation}
Examples provided in \cite{BGMS} show that the assumption that $\mathscr{L}(\eps)$ admits 
a bounded density can not be dropped entirely. Whether the assumption that 
\eqref{delta condition} holds with $\delta >1$ can be relaxed is very much open at present writing.

We aim to estimate the rate of convergence of $\mathscr{L}\left(M_{O}(I_{O})\right) \to \Poi(\lambda)$ 
in total variation. To this end, we apply an approximation result from \cite{AGG}. 
Recall that for any $B\in H,$ the eigenvalue $\lambda(B)$ can be written in the form
\begin{equation}
\lambda(B)=\lambda_{H}\left(1+U(B)\right)
\label{lambda-U}
\end{equation}
where
\begin{equation}
U(B)=\sum_{B\subseteq B_{k}}\alpha_{k}\eps(B_{k})  
\label{U_H representation}
\end{equation}
and $\alpha_{k}=C(B_{k})/\lambda _{H}$. The common distribution function $\mathcal{N}_H$ 
of the family $\{U(B)\}_{B\in H}$ is absolutely continuous and its density $\mathfrak{n}_{H}$ 
relates to the integrated density of states $\mathfrak{m}_{H}$ by the equation 
\begin{equation*}
\mathfrak{n}_{H}(t)=\lambda _{H}\mathfrak{m}_{H}(\lambda_{H}t+\lambda _{H}).
\end{equation*}
In particular, $\mathfrak{n}_{H}$ is supported by some interval $[-\epsilon,\epsilon ]\subset (-1,1)$, 
is continuous, and is strictly positive at $t_{0}=\tau_{0}/\lambda_{H}-1$. 
Let $N_{O}$ be the empirical process defined by the family $\left\{U(B)\right\}$, 
$B\in \mathcal{B}_{HO}$, and choose the interval $J_{O}$ as 
\begin{equation*}
J_{O}=\Big\{t:\left\vert t-t_{0}\right\vert 
\leq \frac{d}{2\left\vert \mathcal{B}_{HO}\right\vert }\Big\} ,\text{ }
d=c/\lambda_{H}.
\end{equation*}
Then equation \eqref{lambda-U} yields
\begin{equation*}
\P\left\{M_{O}(I_{O})=k\right\}=\P\left\{N_{O}(J_{O})=k\right\}.
\end{equation*}
Thus in what follows, we can restrict our attention to the family $\{U(B)\}_{B\in H}$. 
Since we fix a horocycle $H$ we can further simplify our task by replacing $X$ with the 
discrete Abelian group 
\begin{equation*}
G=\bigoplus_{k\geq 1}\Z\left(n_{k}\right),\text{ } n_{k}\in\N_{\geq 2}, 
\end{equation*}
equipped with its canonical ultrametric structure defined by the family 
$\{G_{\ell}\}_{\ell\geq 0}$ of its finite subgroups 
\begin{equation*}
G_{0}=\{e\},\text{ }G_{\ell}=\prod_{1\leq k\leq \ell}\Z\left(n_{k}\right) .
\end{equation*}
Denote the order of $G_{\ell}$ by $\pi_{\ell}$ so that $\pi_{\ell}=n_{0}n_{1}\cdots n_{\ell}$ 
with the convention that $n_{0}=1$.
We have that $H=H_{0}$ is the set of all singletons, $H_{1}$ is the set of all
ultrametric balls of the form $g+G_{1}$, and so forth. 

When $B=\{g\}$ we write $U(B)=U_{g}(\omega)$. 
For the increasing sequence of balls $B_{k}\supseteq B$, 
we set $\eps(B_{k})=\eps(g_{k})$. 
We assume henceforth that the random variables $\{\eps (g_{k})\}_{g\in G,k\in\N_{0}}$ 
are independent and satisfy 
\begin{itemize}
\item $\left|\eps(g_{k})\right|<1$ for all $g \in G$, $k \geq 0$.

\item $\{\eps(g_{k})\}_{g\in G}$ are identically distributed for each $k\geq 0$.

\item $\eps(e_{0})$ is absolutely continuous with bounded density $\eta_{\eps}$.
\end{itemize}
In particular, there is no symmetry assumption, $\eps(g_k)$ and $\eps(g_\ell)$ do not have to have 
the same distribution for $k\neq \ell$, and $\eps(g_k)$ does not have to be absolutely continuous 
(or have bounded density) for $k>0$. Though these relaxations are not all applicable to the setting 
of perturbed hierarchical Laplacians as previously defined, it is of interest to prove a quantitative law of 
rare events with the same basic dependence structure in as much generality as possible.

Define the stationary process $\{U_{g}\}_{g\in G}$ by
\begin{equation*}
U_{g}=\sum_{k=0}^{\infty }\alpha_{k}\eps_{g_{k}}.
\end{equation*}
In what follows, we need only assume that the sequence $\{\alpha_{k}\}_{k=0}^{\infty}$ satisfies 
\begin{equation*}
\sum_{k=\ell}^{\infty}\alpha_{k}\leq K\pi_{\ell}^{-(1+\gamma)}
\end{equation*}
for some constants $K,\gamma>0$. Here we are thinking of $\gamma=\delta - 1$, 
a more convenient notation for subsequent computations.
Note that absolute continuity of $\eps(g_{0})$ ensures that $U_{g}$ is absolutely continuous. 
Moreover, writing $U_{g}=Y_{g}^{0}+T_{g}^{0}$ where $Y_{g}^{0}=\alpha_{0}\eps(g_{0})$ 
has density $f_{0}(x)=\frac{1}{\alpha_{0}}\eta_{\eps}\left(\frac{x}{\alpha_{0}}\right)$ and 
$T_{g}^{0}=\sum_{k=1}^{\infty}\alpha_{k}\eps(g_{k})$ has distribution $\nu_{0}$, we see that the 
density of $U_{g}$ satisfies
\begin{align} \label{eq:convolution}
\eta(x)=\int f_{0}(x-y)d\nu_{0}(y)\leq \norm{f_{0}}\int d\nu_{0}(y) =\frac{1}{\alpha_{0}}\norm{\eta_{\eps}}
\end{align}
for all $x\in\R$. (We write $\eta$ rather than $\mathfrak{n}$ to emphasize that we are working with 
slightly weaker assumptions than are used in the proof of \eqref{eq:poisconv} from \cite{BGMS}.)  

Let $t_{0}$ be a Lebesgue point of $\eta$ such that $\eta\left(t_{0}\right)>0$ and let $c$ be some 
fixed positive number. 
Set $I_{\ell}=\left[t_{0}-\frac{c}{2\pi_{\ell}},t_{0}+\frac{c}{2\pi_{\ell}}\right]$, 
and define 
\[
X_{g}^{\ell}=1\{U_{g}\in I_{\ell}\}, \ W_{\ell}=\sum_{g\in G_{\ell}}X_{g}^{\ell}.
\]
We are interested in providing error terms for the Poisson approximation of $W_{\ell}$, the analogue of 
$N_{O}(J_{O})$.

For each $\ell\geq 0$, the $X_{g}^{\ell}$'s are identically distributed, so if 
$p_{\ell}=\P\left\{U_{e}\in I_{\ell}\right\}$, then $\E\left[W_{\ell}\right]=\pi_{\ell}p_{\ell}$. 
Writing $\lambda(\ell)$ for this expectation, it follows from the Lebesgue differentiation theorem that
\[
\lim_{\ell\rightarrow\infty}\lambda(\ell)
=c\lim_{\ell\rightarrow\infty}\frac{\pi_{\ell}}{c}\int_{t_{0}-\frac{c}{2\pi_{\ell}}}^{t_{0}
+\frac{c}{2\pi_{\ell}}}\eta(x)dx=c\eta(t_{0})=:\lambda.
\]
We now state our main result, which proves a quantitative version of the convergence in \eqref{eq:poisconv}.

\begin{theorem}
\label{thm: main}
Keeping the above assumptions, set $m=\inf_{j\geq 1}n_{j}$, $M=\sup_{j\geq 1}n_{j}$,
\[
C=\frac{1-e^{-\lambda(\ell)}}{\lambda(\ell)} \left( \lambda(\ell)^{2} 
+ \frac{c^{2}}{\alpha_0^2} \norm{\eta_{\eps}}^{2} \right)
+\frac{16K}{\alpha_0}\left(1\wedge\lambda(\ell)^{-\frac{1}{2}}\right)\norm{\eta_{\eps}},
\]
and let $M_{\ell}$ be the running maximum of the subsequence of 
$\{n_{j}\}_{j=1}^{\infty}$ defined in \eqref{eq:index_choice}.
\begin{enumerate}[(a)]
\item \label{item:finite_sup_tv} If $M < \infty$, then 
\begin{align*}
\tv{\mathscr{L}\left(W_{\ell}\right)-\Poi\left(\lambda(\ell)\right)}
\leq Cm^{-\ell \frac{\gamma}{\log_{m}(M)+1+\gamma}}.
\end{align*}

\item \label{item:infinite_sup_tv} If $M = \infty$, then
\begin{align*}
\tv{\mathscr{L}\left(W_{\ell}\right)-\Poi\left(\lambda(\ell)\right)}\leq C M_{\ell-1}^{-\gamma/3}.
\end{align*}
\end{enumerate}
\end{theorem}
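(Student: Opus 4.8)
The plan is to invoke the Arratia--Goldstein--Gordon (AGG) Poisson approximation theorem, which bounds $\tv{\mathscr{L}(W_\ell)-\Poi(\lambda(\ell))}$ by $b_1+b_2+b_3$, where, for a chosen ``neighborhood of dependence'' $B_g \subseteq G_\ell$ for each $g$, one has $b_1=\sum_g\sum_{h\in B_g}\E[X_g^\ell]\E[X_h^\ell]$, $b_2=\sum_g\sum_{h\in B_g, h\neq g}\E[X_g^\ell X_h^\ell]$, and $b_3=\sum_g\E\big|\E[X_g^\ell - \E X_g^\ell \mid (X_h^\ell)_{h\notin B_g}]\big|$. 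The crucial structural fact is that $U_g=\sum_k\alpha_k\eps_{g_k}$ depends on $\eps_{g_0}$ (an index private to $g$) together with the shared ancestor variables $\eps_{g_k}$, $k\geq 1$. So the natural choice of dependence neighborhood is a ball $B_g = g+G_j$ for a cutoff level $j=j(\ell)$ to be optimized: two singletons in different $G_j$-cosets share only ancestors at levels $>j$, and the tail $\sum_{k>j}\alpha_k$ is small by the standing hypothesis $\sum_{k\geq j}\alpha_k\leq K\pi_j^{-(1+\gamma)}$. This makes $b_3$ small (the conditional expectation is nearly deterministic once we condition away a small-variance tail), while keeping $|B_g|=\pi_j$ not too large so that $b_1$ and $b_2$ stay controlled.

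First I would bound $b_1$: since $\E[X_g^\ell]=p_\ell$ and $\pi_\ell p_\ell=\lambda(\ell)$, we get $b_1 = \pi_\ell\cdot\pi_j\cdot p_\ell^2 = \lambda(\ell)^2\,\pi_j/\pi_\ell$. For $b_2$ I would estimate $\E[X_g^\ell X_h^\ell]=\P\{U_g\in I_\ell,\,U_h\in I_\ell\}$ for $h\in B_g\setminus\{g\}$ using the convolution bound \eqref{eq:convolution}: conditioning on the shared tail, $U_g$ and $U_h$ are conditionally independent each with density $\leq \frac{1}{\alpha_0}\norm{\eta_\eps}$, so $\P\{U_g\in I_\ell, U_h\in I_\ell\}\leq \big(\frac{c}{\pi_\ell}\cdot\frac{1}{\alpha_0}\norm{\eta_\eps}\big)^2$, giving $b_2\leq \pi_\ell\cdot\pi_j\cdot\frac{c^2}{\alpha_0^2\pi_\ell^2}\norm{\eta_\eps}^2 = \frac{c^2}{\alpha_0^2}\norm{\eta_\eps}^2\,\pi_j/\pi_\ell$. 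For $b_3$, write $U_g = S_g^{j} + T_g^{j}$ where $S_g^j=\sum_{k\leq j}\alpha_k\eps_{g_k}$ is $\sigma((X_h^\ell)_{h\notin B_g})$-measurable-compatible up to the private coordinate and $T_g^j=\sum_{k>j}\alpha_k\eps_{g_k}$ has $|T_g^j|\leq \sum_{k>j}\alpha_k\leq K\pi_j^{-(1+\gamma)}$ a.s.; comparing $\P\{U_g\in I_\ell\mid \mathcal{F}\}$ with $\P\{S_g^j+ (\text{independent copy of }T)\in I_\ell\}$ and using the density bound $\leq\frac{1}{\alpha_0}\norm{\eta_\eps}$ to absorb the $O(K\pi_j^{-(1+\gamma)})$ perturbation of the interval endpoints, one gets $b_3 \leq \pi_\ell\cdot\frac{1}{\alpha_0}\norm{\eta_\eps}\cdot K\pi_j^{-(1+\gamma)} = \frac{K}{\alpha_0}\norm{\eta_\eps}\,\pi_\ell\,\pi_j^{-(1+\gamma)}$. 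The AGG theorem also supplies the factor $\frac{1-e^{-\lambda(\ell)}}{\lambda(\ell)}$ on $b_1+b_2$ and $1\wedge\lambda(\ell)^{-1/2}$ on $b_3$, which is exactly the shape of the constant $C$ in the statement (the numerical constant $16$ accommodating the interval-perturbation and a dyadic bookkeeping loss).

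Then I would optimize over the cutoff $j$. The competition is between $(\pi_j/\pi_\ell)$ terms, increasing in $j$, and the $\pi_\ell\pi_j^{-(1+\gamma)}$ term, decreasing in $j$; balancing $\pi_j/\pi_\ell \asymp \pi_\ell \pi_j^{-(1+\gamma)}$ gives $\pi_j \asymp \pi_\ell^{2/(2+\gamma)}$, i.e.\ $\pi_j/\pi_\ell\asymp\pi_\ell^{-\gamma/(2+\gamma)}$. In the bounded case $M=\sup n_j<\infty$, we have $m^{j}\leq\pi_j\leq M^j$ and $\pi_\ell\geq m^\ell$, so choosing $j=j(\ell)$ to make $\pi_j$ as close as possible to the optimum and bounding $\pi_j\leq M^j$, $\pi_\ell\geq m^\ell$ produces an error $\lesssim m^{-\ell\gamma/(\log_m M + 1+\gamma)}$ after taking logs base $m$; this is where the somewhat awkward exponent in part \eqref{item:finite_sup_tv} comes from, reflecting the gap between $m$ and $M$. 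In the unbounded case $M=\infty$ one cannot use a uniform $M^j$ bound, so instead I would select the cutoff level $j$ along the subsequence of indices referenced in \eqref{eq:index_choice} (those $j$ where $n_j$ attains a new running maximum $M_j$), arranged so that $\pi_j$ lands within a bounded multiplicative factor of $\pi_\ell^{2/(2+\gamma)}$ while $n_{j+1}$ is controlled by $M_{\ell-1}$; this yields the bound $\lesssim M_{\ell-1}^{-\gamma/3}$, the $1/3$ being the worst case of $\gamma/(2+\gamma)$ combined with the loss from only being able to place the cutoff at a running-maximum index.

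The main obstacle I anticipate is the estimate of $b_3$: one must carefully set up the conditioning so that the $\sigma$-field generated by $\{X_h^\ell : h\notin B_g\}$ is handled correctly — note this is coarser than conditioning on all ancestor variables at levels $>j$, since different cosets outside $B_g$ have different deep ancestors — and then control the conditional law of $U_g$ given this field. The cleanest route is to bound $b_3$ by conditioning instead on the richer field generated by all $\{\eps_{g_k}: k\geq j\}$ (which makes $\E[X_g^\ell\mid\cdot]$ a function of $S_g^j$ alone) and to observe that the AGG $b_3$ only gets smaller, then estimate $\E\big|\P\{S_g^j+T_g^j\in I_\ell\mid S_g^j\} - \P\{S_g^j+\widetilde T\in I_\ell\}\big|$ where $\widetilde T$ is an independent copy, via the uniform density bound on $S_g^j$ (which inherits the $\frac1{\alpha_0}\norm{\eta_\eps}$ bound from $\eps_{g_0}$ exactly as in \eqref{eq:convolution}) applied to the $2K\pi_j^{-(1+\gamma)}$-fattening of $I_\ell$. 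A secondary technical point is the index-selection combinatorics in \eqref{eq:index_choice} for the $M=\infty$ case, which requires showing a running-maximum index can always be found in the right range without the $\pi$'s overshooting.
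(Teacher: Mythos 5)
Your proposal matches the paper's strategy for Lemma~\ref{lem:bounds} and for part~(a) of Lemma~\ref{lem:dep_nhoods}: invoke Theorem~\ref{thm:AGG}, take dependency neighborhoods $B_g = g + G_k$, derive $b_1 = \lambda(\ell)^2\pi_k\pi_\ell^{-1}$, $b_2 \leq \frac{c^2}{\alpha_0^2}\norm{\eta_\eps}^2\pi_k\pi_\ell^{-1}$, $b_3 \leq \frac{16K}{\alpha_0}(1\wedge\lambda(\ell)^{-1/2})\norm{\eta_\eps}\pi_\ell\pi_{k+1}^{-(1+\gamma)}$, and balance via $m^k \leq \pi_k \leq M^k$. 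One small transposition in your $b_3$ sketch: conditioning on the shared ancestor variables makes the conditional probability a function of the \emph{tail} $T_g^k$, not of the head $S_g^j$; the paper compares $\P\{Y_g^k \in I_\ell - T_g^k \mid T_g^k\}$ against $p_\ell$ by introducing an independent copy $\widetilde T_g^k$ and applying the density bound to the $2a_k$-fattening of $\partial I_\ell$, which is the computation you outline, so this is just notation.

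The genuine gap is in your plan for $M=\infty$. You propose choosing the cutoff $k$ to be a running-maximum index lying near $\pi_\ell^{2/(2+\gamma)}$, but the running-maximum indices $j_i$ can be arbitrarily sparse (arbitrarily long stretches of constant $n_j$ between jumps), so no such index need exist anywhere near the balancing target; if $k$ is forced to the most recent running-max position, $\pi_\ell\pi_{k+1}^{-(1+\gamma)}$ can blow up as $\ell$ moves through a long block. The paper's Lemma~\ref{lem:dep_nhoods}(b) instead uses the $j_i$'s only to partition $\ell$ into blocks $(j_i, j_{i+1}]$ and to fix the target $n_{j_i}^{-\gamma/3} = M_{\ell-1}^{-\gamma/3}$; within each block, $k(\ell)$ is chosen iteratively through intermediate indices $w_0 < w_1 < \cdots$ (\emph{not} restricted to running-max positions): $\ell$ is advanced with $k$ held fixed until $\pi_\ell\pi_{k+1}^{-(1+\gamma)}$ nears the threshold (captured by the quantity $u(r)$), at which point $k$ is bumped by exactly enough (captured by $v(r)$) to restore the $b_3$ estimate without ruining $\pi_k\pi_\ell^{-1}$. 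The defining condition $n_{j_{i+1}} > n_{j_i}^{\gamma/3}$ in \eqref{eq:index_choice}, which you omit in your description of the subsequence, is what guarantees the iteration always terminates inside a block; this bookkeeping, and not a worst-case comparison of $\gamma/(2+\gamma)$, is the source of the $\gamma/3$ exponent.
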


Note that $\displaystyle{\frac{1-e^{-x}}{x} \leq 1\wedge x^{-1}}$ and
\[
\lambda(\ell)=\pi_{\ell}\int_{I_{\ell}}\eta(x)dx \leq \pi_{\ell}|I_{\ell}|\norm{\eta}
\leq\frac{c\norm{\eta_{\eps}}}{\alpha_{0}},
\]
so we have
\begin{equation}
\label{C bound}
C \leq \lambda(\ell)+\frac{c^{2}\norm{\eta_{\eps}}^{2}}{\alpha_{0}^{2}}
+\frac{16K\norm{\eta_{\eps}}}{\alpha_{0}}
\leq \frac{\norm{\eta_{\eps}}}{\alpha_{0}}\left(c+\frac{c^{2}\norm{\eta_{\eps}}}{\alpha_{0}}
+16K\right),
\end{equation}
which is independent of both $t_0$ and $\ell$.

With Theorem~\ref{thm: main} in hand, the rate of convergence to the $\Poi(\lambda)$ distribution 
can be estimated as 
\begin{multline*}
\tv{\mathscr{L}\left(W_{\ell}\right)-\Poi(\lambda)} \leq \tv{\mathscr{L}\left(W_{\ell}\right)-\Poi\left(\lambda(\ell)\right)}
+\tv{\Poi\left(\lambda(\ell)\right)-\Poi(\lambda)}\\
 =\tv{\mathscr{L}\left(W_{\ell}\right)-\Poi\left(\lambda(\ell)\right)}
+\frac{1}{2}\sum_{k=0}^{\infty}\frac{1}{k!}\left|\lambda(\ell)^{k}e^{-\lambda(\ell)}-\lambda^{k}e^{-\lambda}\right|,
\end{multline*}
which goes to zero since $\lambda(\ell)\rightarrow\lambda$. See \cite{AdJod} for more concise 
bounds on the second term in the final expression. 

The next section is devoted to the proof of Theorem~\ref{thm: main}, but first we consider a concrete 
application of this result. Let $\Q_{p}$ be the field of $p$-adic numbers and let $\mathfrak{D}^{\alpha}$ 
be the fractional derivative operator of order $\alpha$ defined in \eqref{frac-deriv}; 
see also \eqref{C-B-fractur}.
Then $\mathfrak{D}^{\alpha}$ is a hierarchical Laplacian corresponding to the function 
$C(B)=(p^{\alpha}-1)m(B)^{-\alpha}$, so its eigenvalues are $\lambda(B)=p^{\alpha}m(B)^{-\alpha}$. 
Every $p$-adic ball in the horocycle $H_k$ is of the form $B=a+p^{k}\Z_{p}$ (where $\Z_{p}$ is the 
compact group of $p$-adic integers) and thus has Haar measure $m(B)=p^{-k}$.
Therefore every $B\in H_{k}$ gives rise to the eigenvalue $\lambda_{k}=p^{\alpha(k+1)}$. 

Without loss of generality, we fix the horocycle $H=H_{-1}$ corresponding to the eigenvalue 
$\lambda_{-1}=1$. We consider a perturbation of $\mathfrak{D}^{\alpha}$ by i.i.d. 
$\eps(B)\sim\text{Unif}(-1,1)$.
Since $H$ is fixed, we can replace $\Q_{p}$ with $G=\bigoplus_{k\geq1}\Z(n_{k})$ where $n_{k}=p$ 
for all $k$. One readily checks that \eqref{delta condition} is satisfied with $\delta = \alpha$ and 
$\alpha>1$. 
Setting $\alpha_{k}=C(B_{-1-k})/\lambda_{-1}=(1-p^{-\alpha})p^{-\alpha k}$, we have
\[
\sum_{k=\ell}^{\infty}\alpha_{k}
=(1-p^{-\alpha})\sum_{k=\ell}^{\infty}p^{-\alpha k}
=p^{-\alpha\ell}=K\pi_{\ell}^{-(1+\gamma)}
\]
with $\gamma=\alpha-1$, $K=1$. 

For the sake of notational simplicity, we fix $t_{0}=0$ and $c=\pi$. We have that 
$U=U_{e}=\sum_{k=0}^{\infty}\alpha_{k}\eps_{k}$ with $\{\eps_{k}\}_{k=0}^{\infty}$
i.i.d. $\text{Unif}(-1,1)$, so $U$ has characteristic function
$\varphi(t)=\prod_{k=0}^{\infty}\frac{\sin(\alpha_{k}t)}{\alpha_{k}t}$ (where we take 
$\frac{\sin(0)}{0}=1$).
Fourier inversion \cite{Dur} gives 
\[
\lambda=\pi\eta(0)
=\frac{1}{2}\int_{-\infty}^{\infty} \prod_{k=0}^{\infty}\frac{\sin(\alpha_{k}s)}{\alpha_{k}s}ds
\]
and 
\begin{align*}
\lambda(\ell) & =p^{\ell}\P\left(-\frac{\pi}{2p^{\ell}}\leq U\leq\frac{\pi}{2p^{\ell}}\right)\\
 & =\frac{p^{\ell}}{2\pi}\int_{-\infty}^{\infty} \frac{\exp\left(\frac{\pi is}{2p^{\ell}}\right)
-\exp\left(-\frac{\pi is}{2p^{\ell}}\right)}{is}\varphi(s)ds\\
 & =\frac{1}{2}\int_{-\infty}^{\infty} \frac{2p^{\ell}}{\pi s} \sin\left(\frac{\pi s}{2p^{\ell}}\right)
\prod_{k=0}^{\infty}\frac{\sin(\alpha_{k}s)}{\alpha_{k}s}ds.
\end{align*}

Since $\left\Vert \eta_{\eps}\right\Vert _{\infty}=\frac{1}{2}$,
$m=M=p$, and $\alpha_{0}=1-p^{-\alpha}$, Theorem \ref{thm: main} shows that 
\[
\left\Vert \mathscr{L}(W_{\ell})-\Poi(\lambda(\ell))\right\Vert _{TV}\leq C p^{-\frac{\alpha-1}{\alpha+1}\ell}
\]
where, by Equation \eqref{C bound},
\[
C\leq\frac{1}{2(1-p^{-\alpha})}\left(\pi+\frac{\pi^{2}}{2(1-p^{-\alpha})}+16\right)<30.
\]
In particular, the bound on the error is of order 
\[
p^{-\frac{\alpha-1}{\alpha+1}\ell}=\left(\frac{1}{|B_{-\ell}|}\right)^{\frac{\alpha-1}{\alpha+1}}, 
\]
whereas one would expect a $O\left(\frac{1}{|B_{-\ell}|}\right)$ bound if the $\lambda(B)$ terms 
were independent.

\section{Convergence Rates}
\setcounter{equation}{0}

To prove Theorem~\ref{thm: main}, we appeal to the following result from \cite{AGG}.
\begin{theorem}
\label{thm:AGG}
Suppose that $\{X_{\alpha}\}_{\alpha\in J}$ are Bernoulli random variables with success probabilities 
$p_{\alpha}=\P\left\{X_{\alpha}=1\right\}\in(0,1)$.
For $\alpha\neq\beta$, write $p_{\alpha\beta}=\E\left[X_{\alpha}X_{\beta}\right]$.
Set $W=\sum_{\alpha\in J}X_{\alpha}$ and assume that $\mu=\E\left[W\right]<\infty$.
For each $\alpha\in J$, let $B_{\alpha}\subseteq J$ be a subset containing $\alpha$. 
Define $V_{\alpha}=\sum_{\beta\in J\setminus B_{\alpha}}X_{\beta}$,
\begin{align*}
b_{1} & =\sum_{\alpha\in J}\sum_{\beta\in B_{\alpha}}p_{\alpha}p_{\beta},\\
b_{2} & =\sum_{\alpha\in J}\sum_{\beta\in B_{\alpha}\setminus\{\alpha\}}p_{\alpha\beta},\\
b_{3} & =\sup_{f\in\mathcal{F}}\sum_{\alpha\in J}\E\left[(X_{\alpha}-p_{\alpha})f(V_{\alpha}+1)\right],
\end{align*}
where $\mathcal{F}=\left\{f:\N_{0} \to \R\textrm{ such that }\norm{f}\leq1\wedge\mu^{-\frac{1}{2}}\right\}$.
Then
\[
\tv{\mathscr{L}(W)-\Poi(\mu)}\leq\frac{1-e^{-\mu}}{\mu}(b_{1}+b_{2})+b_{3}.
\]
\end{theorem}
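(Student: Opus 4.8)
The plan is to prove Theorem~\ref{thm:AGG} by the Chen-Stein method; although the statement is quoted from \cite{AGG}, I indicate how one derives it. Fix a target set $A\subseteq\N_{0}$; since $\mathscr{L}(W)$ and $\Poi(\mu)$ are both supported on $\N_{0}$, it suffices to bound $|\mathscr{L}(W)(A)-\Poi(\mu)(A)|$ uniformly over such $A$. I would first introduce the Stein equation for the Poisson law: let $f=f_{A}:\N_{0}\to\R$ solve
\[
\mu f(j+1)-jf(j)=\mathbf{1}_{A}(j)-\Poi(\mu)(A),\qquad j\geq 0 .
\]
This determines $f$ uniquely except for the value $f(0)$, which is irrelevant below because $jf(j)$ vanishes at $j=0$ and every argument of $f$ occurring later is at least $1$. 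I would then invoke the classical Stein-factor estimates for this solution (see \cite{AGG} and the references therein): $\norm{f}\leq 1\wedge\mu^{-\frac{1}{2}}$, so that $f\in\mathcal{F}$, and $\sup_{j\geq 0}|f(j+1)-f(j)|\leq\frac{1-e^{-\mu}}{\mu}$. Because $Z\sim\Poi(\mu)$ is characterized by $\E[\mu g(Z+1)-Zg(Z)]=0$ for all bounded $g$, substituting $W$ into the Stein equation gives
\[
\mathscr{L}(W)(A)-\Poi(\mu)(A)=\E[\mu f(W+1)-Wf(W)]=\sum_{\alpha\in J}\E[p_{\alpha}f(W+1)-X_{\alpha}f(W)],
\]
where I used $\mu=\sum_{\alpha}p_{\alpha}$ and $Wf(W)=\sum_{\alpha}X_{\alpha}f(W)$.

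The second step is to exploit the local dependence structure carried by the neighborhoods $B_{\alpha}$. For each $\alpha$ write $W=X_{\alpha}+U_{\alpha}+V_{\alpha}$ with $U_{\alpha}=\sum_{\beta\in B_{\alpha}\setminus\{\alpha\}}X_{\beta}$ and $V_{\alpha}=\sum_{\beta\in J\setminus B_{\alpha}}X_{\beta}$, and put $W_{\alpha}=W-X_{\alpha}$. Using that $X_{\alpha}\in\{0,1\}$, so that $\E[X_{\alpha}f(W)]=\E[X_{\alpha}f(W_{\alpha}+1)]$, I would split the $\alpha$-th summand as $\E[p_{\alpha}f(W+1)-X_{\alpha}f(W)]=\mathrm{I}_{\alpha}+\mathrm{II}_{\alpha}+\mathrm{III}_{\alpha}$, where
\[
\mathrm{I}_{\alpha}=p_{\alpha}\,\E[f(W+1)-f(V_{\alpha}+1)],\quad \mathrm{II}_{\alpha}=\E[(p_{\alpha}-X_{\alpha})f(V_{\alpha}+1)],\quad \mathrm{III}_{\alpha}=\E[X_{\alpha}(f(V_{\alpha}+1)-f(W_{\alpha}+1))].
\]
Since $W+1=(V_{\alpha}+1)+\sum_{\beta\in B_{\alpha}}X_{\beta}$ and $\sum_{\beta\in B_{\alpha}}X_{\beta}$ is a nonnegative integer, the telescoping inequality $|f(m+n)-f(m)|\leq n\sup_{j}|f(j+1)-f(j)|$ yields $|\mathrm{I}_{\alpha}|\leq\frac{1-e^{-\mu}}{\mu}\,p_{\alpha}\sum_{\beta\in B_{\alpha}}p_{\beta}$, which sums to $\frac{1-e^{-\mu}}{\mu}\,b_{1}$. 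Likewise, $W_{\alpha}+1=(V_{\alpha}+1)+U_{\alpha}$ gives $|\mathrm{III}_{\alpha}|\leq\frac{1-e^{-\mu}}{\mu}\,\E[X_{\alpha}U_{\alpha}]=\frac{1-e^{-\mu}}{\mu}\sum_{\beta\in B_{\alpha}\setminus\{\alpha\}}p_{\alpha\beta}$, which sums to $\frac{1-e^{-\mu}}{\mu}\,b_{2}$. Finally, as $f\in\mathcal{F}$ and $\mathcal{F}$ is symmetric under $f\mapsto-f$, one has $|\sum_{\alpha}\mathrm{II}_{\alpha}|=|\sum_{\alpha}\E[(X_{\alpha}-p_{\alpha})f(V_{\alpha}+1)]|\leq b_{3}$.

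Summing the three estimates gives $|\mathscr{L}(W)(A)-\Poi(\mu)(A)|\leq\frac{1-e^{-\mu}}{\mu}(b_{1}+b_{2})+b_{3}$, and taking the supremum over $A\subseteq\N_{0}$ produces the asserted total variation bound. The only genuinely delicate ingredient is the pair of Stein-factor estimates for $f_{A}$; I would cite these classical bounds rather than re-derive them, since their proofs analyze the explicit representation of $f_{A}$ as a ratio involving partial sums of Poisson weights and would lead us away from the present exposition. Everything else reduces to the local decomposition $W=X_{\alpha}+U_{\alpha}+V_{\alpha}$, the telescoping bound on increments of $f$, and the triangle inequality.
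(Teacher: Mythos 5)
Your derivation is correct and is exactly the standard Chen--Stein local-dependence argument: the Stein equation with the Barbour--Eagleson factor bounds $\norm{f_{A}}\leq 1\wedge\mu^{-1/2}$ and $\sup_{j}|f_{A}(j+1)-f_{A}(j)|\leq\frac{1-e^{-\mu}}{\mu}$, the decomposition $W=X_{\alpha}+U_{\alpha}+V_{\alpha}$ with $X_{\alpha}f(W)=X_{\alpha}f(W_{\alpha}+1)$, telescoping bounds yielding the $b_{1}$ and $b_{2}$ contributions, and the supremum over $\mathcal{F}$ (closed under $f\mapsto-f$, so the sign is harmless) absorbing the remaining term into $b_{3}$. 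The paper gives no proof of this statement—it is quoted verbatim from \cite{AGG}—so your reconstruction follows the approach of the cited source, and deferring the two Stein-factor estimates to the literature is consistent with how the paper itself treats the result.
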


For the problem considered here, we have $J=G_{\ell}$, $X_{\alpha}=X_{g}^{\ell}$, $\mu=\lambda(\ell)$. 
We take our \emph{dependency neighborhoods} to be the closed $k$-balls
\begin{align} \label{eq:dep nbhds}
B_{g}=g+G_{k}
\end{align}
for some $k = k(\ell)\in [0,\ell)$ to be determined. The following lemma provides bounds on 
$b_1, b_2$ and $b_3$ in terms of the size of the dependency neighborhoods $B_{g}$.

\begin{lemma} 
\label{lem:bounds}
Let $b_1, b_2$ and $b_3$ be the constants introduced in Theorem~\ref{thm:AGG} where the 
other terms are interpreted as in the preceding paragraph. Then
\begin{align*}
b_{1} & = \lambda(\ell)^{2} \pi_{k}\pi_{\ell}^{-1},\\
b_{2} & \leq \frac{c^2}{\alpha_{0}^{2}}\norm{\eta_{\eps}}^2 \pi_{k}\pi_{\ell}^{-1} ,\\
b_{3} & \leq \frac{16K}{\alpha_0}\left(1\wedge\lambda(\ell)^{-\frac{1}{2}}\right)\norm{\eta_{\eps}} \pi_{\ell}\pi_{k+1}^{-(1+\gamma)}.
\end{align*}
\end{lemma}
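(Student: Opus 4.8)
The plan is to estimate each of the three quantities $b_1$, $b_2$, $b_3$ from Theorem~\ref{thm:AGG} separately, using the specific structure of the dependency neighborhoods $B_g = g + G_k$ and the way $U_g$ decomposes along the geodesic from $\varpi$.

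First I would compute $b_1$ exactly. Since the $X_g^\ell$ are identically distributed with success probability $p_\ell$, we have $p_g = p_\ell$ for all $g$, and $\lambda(\ell) = \pi_\ell p_\ell$, so
\[
b_1 = \sum_{g \in G_\ell} \sum_{h \in B_g} p_g p_h = \pi_\ell \cdot |G_k| \cdot p_\ell^2 = \pi_\ell \pi_k p_\ell^2 = \lambda(\ell)^2 \pi_k \pi_\ell^{-1},
\]
using $|B_g| = |G_k| = \pi_k$.

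Next, $b_2$. Here the key observation is that for $g \neq h$ with $h \in B_g$ — i.e.\ $h - g \in G_k \setminus \{e\}$ — the variables $U_g$ and $U_h$ share the tail $T_g^{(j)} = \sum_{i \ge j}\alpha_i \eps(g_i)$ for $j$ large enough that $g$ and $h$ lie in the same $j$-ball; in particular they share $T^{(k)}$. Conditioning on this shared tail, $U_g$ and $U_h$ become independent, and each has a conditional density bounded by $\|f_0\|_\infty/\ldots$ — more precisely, by \eqref{eq:convolution} the unconditional density $\eta$ of $U_g$ satisfies $\|\eta\|_\infty \le \alpha_0^{-1}\|\eta_\eps\|_\infty$, and the same bound holds for the conditional density of $U_g$ given $T_g^{(k)}$ since conditioning only fixes part of the tail while $Y_g^0 = \alpha_0 \eps(g_0)$ remains free. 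Hence
\[
p_{gh} = \E\big[\P(U_g \in I_\ell \mid T^{(k)})\,\P(U_h \in I_\ell \mid T^{(k)})\big] \le \big(|I_\ell|\, \alpha_0^{-1}\|\eta_\eps\|_\infty\big)^2 = \frac{c^2}{\pi_\ell^2}\,\frac{\|\eta_\eps\|_\infty^2}{\alpha_0^2},
\]
and summing over $g \in G_\ell$ and the at most $\pi_k - 1 < \pi_k$ choices of $h$ gives $b_2 \le \frac{c^2}{\alpha_0^2}\|\eta_\eps\|_\infty^2\,\pi_k \pi_\ell^{-1}$.

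The hard part will be $b_3$, which measures how far $X_g^\ell$ is from being independent of $V_g = \sum_{h \notin B_g} X_h^\ell$. The idea is to split $U_g = Y_g + T_g$ where $T_g = \sum_{i \ge k} \alpha_i \eps(g_i)$ collects exactly those innovations $\eps(g_i)$ that $U_g$ shares with variables outside $B_g$ (geodesics from $\varpi$ to points outside $g + G_k$ branch off from the geodesic to $g$ at level $k$ or earlier), and $Y_g = \sum_{i < k}\alpha_i \eps(g_i)$ is the ``private'' part. Then, conditionally on all the $\eps(g_i)$ with $i \ge k$ (for all $g$), the event $\{X_g^\ell = 1\}$ depends only on the private $Y_g$ while $V_g + 1$ is a fixed function of the shared randomness; so $\E[(X_g^\ell - p_g)f(V_g+1)]$ would vanish if $p_g$ were replaced by the conditional probability $\P(U_g \in I_\ell \mid \text{shared})$. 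The discrepancy between $p_g$ and this conditional probability is controlled by the total size of the perturbation coming from the shared tail: moving $T_g$ by its range perturbs the location of $I_\ell$ by at most $\sum_{i \ge k}\alpha_i \le K \pi_k^{-(1+\gamma)}$, and since $U_g$ has density $\le \alpha_0^{-1}\|\eta_\eps\|_\infty$ this changes the probability of landing in $I_\ell$ by at most $2\alpha_0^{-1}\|\eta_\eps\|_\infty K \pi_k^{-(1+\gamma)}$. Using $\|f\|_\infty \le 1 \wedge \lambda(\ell)^{-1/2}$, summing over the $\pi_\ell$ indices $g$, and absorbing the small combinatorial constants into the factor $16$, one arrives at $b_3 \le \frac{16K}{\alpha_0}\big(1 \wedge \lambda(\ell)^{-1/2}\big)\|\eta_\eps\|_\infty\, \pi_\ell \pi_{k+1}^{-(1+\gamma)}$; the appearance of $\pi_{k+1}$ rather than $\pi_k$ comes from being slightly more careful about which level the relevant geodesics branch at. The main obstacle is making the conditioning argument for $b_3$ fully rigorous — identifying precisely the $\sigma$-algebra generated by the ``shared'' innovations, checking that $V_g + 1$ is measurable with respect to it while $Y_g$ is independent of it, and bounding the resulting expectation uniformly over $f \in \mathcal{F}$; the bounds for $b_1$ and $b_2$ are comparatively routine once the shared-tail decomposition is set up.
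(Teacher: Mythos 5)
Your computation of $b_1$ matches the paper exactly. However, both remaining bounds have genuine problems.

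For $b_2$, the claimed identity
$p_{gh} = \E\bigl[\P(U_g \in I_\ell \mid T^{(k)})\,\P(U_h \in I_\ell \mid T^{(k)})\bigr]$
is \emph{false} in general: $U_g$ and $U_h$ are not conditionally independent given $T^{(k)}$. If $h-g \in G_m\setminus G_{m-1}$ with $m<k$, then $\eps(g_m),\dots,\eps(g_{k-1})$ are shared by $U_g$ and $U_h$ but not conditioned away by $T^{(k)}$, so the conditional joint law does not factor. The paper fixes this by conditioning on the tail from the \emph{minimal} branching level $j$ (so that $Y_g^{j}$ and $Y_h^{j}$ really are independent and identically distributed), and applies Proposition~\ref{cond_exp_fcn} at that level, summing $b_2 = \pi_\ell\sum_{j=0}^{k-1}(n_{j+1}-1)\pi_j \E[\P\{Y_g^j\in I_\ell - T_g^j\mid T_g^j\}^2]$. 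The final inequality you get happens to be right (because the conditional density bound $\alpha_0^{-1}\norm{\eta_\eps}$ applies at any branching level), but the derivation as written does not justify it.

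For $b_3$, what you have is essentially a plan, and you say so yourself; but two points deserve flagging as more than bookkeeping. First, the ``shared tail'' must start at index $k+1$, not $k$: $\eps(g_k)$ is the innovation attached to the ball $g+G_k=B_g$ itself and is not shared with any $h\notin B_g$; the correct tail is $T_g^k=\sum_{i\geq k+1}\alpha_i\eps(g_i)$, giving $a_k\leq K\pi_{k+1}^{-(1+\gamma)}$. You notice the $\pi_{k+1}$ should appear but attribute it to ``being more careful,'' when in fact the $\pi_k$ version is simply wrong. Second, the statement ``$V_g+1$ is a fixed function of the shared randomness'' hides the real work: $V_g$ depends not only on $T_g^k$ but also on all $\eps(h_j)$ for $h\notin B_g$; one must define the $\sigma$-field $\mathcal{G}^{(1)}$ generated by $T_g^k$ \emph{and} $\{\eps(g_j)\}_{g\in G_\ell\setminus G_k, j\geq 0}$, check $V^{(1)}$-measurability, and then show $\E[X_g^\ell\mid\mathcal{G}^{(1)}]=\E[X_g^\ell\mid T_g^k]$ using the independence result (Proposition~\ref{cond_exp_ind} in the paper). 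The paper also exploits the coset structure to reduce the sum over $G_\ell$ to a single coset, and obtains the constant by a symmetrization with an independent copy $\widetilde T_g^k$ and a bound on the Lebesgue measure of the symmetric difference of the two shifted intervals; your heuristic ``perturbing by the range changes the probability by at most $2\alpha_0^{-1}\norm{\eta_\eps}a_k$'' captures the right order but not the factor, and cannot be spliced in without the intermediate estimates. In short: $b_1$ is correct, $b_2$ has a concrete logical gap (wrong conditioning level), and $b_3$ is an outline with an off-by-one error that needs the $\sigma$-algebra and symmetrization machinery to become a proof.
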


The proof of Lemma~\ref{lem:bounds} is postponed until Subsection~\ref{sec:bounds_proof}. 
For the moment, note that the dependency neighborhoods \eqref{eq:dep nbhds} must be chosen with care. 
If $k$ is too large, then the $b_1$ and $b_2$ estimates do not  go to zero as $\ell\rightarrow\infty$. 
If it is too small, then the bound on $b_3$ does not go to zero. Also, it is worth observing that 
in most known applications of Theorem~\ref{thm:AGG}, the dependency neighborhoods can be chosen 
so that $b_{3}=0$. When there is long-range dependence, it is typically more efficient to get 
error bounds using size-bias couplings \cite{AGG1,BHJ,Ross}. However, in the case at hand, 
it seems that Theorem~\ref{thm:AGG} is the most straightforward approach, the difficulty of bounding $b_{3}$ notwithstanding. 

The next lemma guarantees that the dependency neighborhoods can always be chosen to make the bounds in 
Lemma~\ref{lem:bounds} vanish as $\ell \to \infty$. To state it, we introduce some notation. Recall the sequence 
$\{n_j\}_{j = 1}^{\infty}\subseteq \N_{\geq 2}$ defining $G$ and the choice of $n_0=1$. Set 
\begin{align} \label{eq:def_M}
m = \inf_{j \geq 1} n_j, \quad M = \sup_{j \geq 1} n_j.
\end{align}
When $M = \infty$, define $\{n_{j_i}\}_{i=0}^{\infty} \subseteq \{n_j\}_{j=0}^{\infty}$ to be the subsequence where
\begin{align} \label{eq:index_choice}
j_0=0 \eqand j_{i+1} = \min\{j>j_i :n_j > n_{j_i} \text{ and } n_j > n_{j_i}^{\gamma/3}\},
\end{align}
and observe that since $n_{j_0}=n_0=1$, we have $j_1 = 1$. For $\ell \geq 0$ we define 
\begin{align} \label{eq:def_M_l}
M_{\ell} = \sup \{n_{j_i} : j_i \leq \ell \}.
\end{align}
 We now have the following result.
\begin{lemma}
\label{lem:dep_nhoods}
Let $m$, $M$, and $M_{\ell}$ be defined as in \eqref{eq:def_M} and \eqref{eq:def_M_l}, respectively. 
Then for every $\ell\geq 1$, we can always choose $k = k(\ell)<\ell$ such that
\begin{enumerate}[(a)]
\item \label{item:finite_sup} If $M < \infty$, then 
\begin{align*}
\pi_k\pi_{\ell}^{-1},\pi_{\ell} \pi_{k+1}^{-(1+\gamma)} 
\leq m^{-\ell\frac{\gamma}{\log_{m}(M)+1+\gamma}}.
\end{align*}

\item \label{item:infinite_sup} If $M = \infty$, then
\begin{align*}
\pi_k\pi_{\ell}^{-1},\pi_{\ell} \pi_{k+1}^{-(1+\gamma)} \leq M_{\ell-1}^{-\gamma/3}.
\end{align*}
\end{enumerate}
\end{lemma}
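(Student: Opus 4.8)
The plan is to treat the two cases separately and, in each, to exhibit an explicit choice of $k=k(\ell)$ and verify the two inequalities directly. The key observation is that both quantities $\pi_k\pi_\ell^{-1}$ and $\pi_\ell\pi_{k+1}^{-(1+\gamma)}$ are controlled by products of consecutive $n_j$'s: writing $\pi_\ell/\pi_{k} = \prod_{j=k+1}^{\ell} n_j$, we have $\pi_k\pi_\ell^{-1} = \left(\prod_{j=k+1}^\ell n_j\right)^{-1}$, while $\pi_\ell\pi_{k+1}^{-(1+\gamma)} = \pi_\ell\pi_{k+1}^{-1}\cdot \pi_{k+1}^{-\gamma} = \left(\prod_{j=k+2}^\ell n_j\right)\pi_{k+1}^{-\gamma}$. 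So the first term wants $\ell-k$ (and the minimum gap $m$) large, while the second wants $\pi_{k+1}^\gamma$ to dominate the ``tail product'' $\prod_{j=k+2}^\ell n_j$, which pushes $k$ up. Balancing these is the crux.

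First I would dispose of case (a), $M<\infty$. Here $m^{\ell-k}\leq \pi_\ell\pi_k^{-1}\leq M^{\ell-k}$ and $\pi_{k+1}\geq m^{k+1}\geq m^k$, so it suffices to choose $k$ proportional to $\ell$: setting $k = \lceil \ell\,\tfrac{\log_m M}{\log_m M + 1 + \gamma}\rceil$ (or the nearest integer below $\ell$), one gets $\ell - k \geq \ell\,\tfrac{1+\gamma}{\log_m M+1+\gamma}$ for the first bound, giving $\pi_k\pi_\ell^{-1}\leq m^{-(\ell-k)}\leq m^{-\ell\gamma/(\log_m M+1+\gamma)}$ after noting $1+\gamma\geq \gamma$; and for the second, $\pi_\ell\pi_{k+1}^{-(1+\gamma)}\leq M^{\ell-k-1}\cdot m^{-(k+1)(1+\gamma)}$, where plugging in the value of $k$ and taking logarithms base $m$ shows the exponent is at most $-\ell\gamma/(\log_m M + 1 + \gamma)$. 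This is a routine but slightly fiddly optimization; one has to be careful with the ceiling/floor rounding and check the inequality survives for all $\ell\geq 1$, but no genuine obstacle arises.

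The harder case is (b), $M=\infty$, where no uniform upper bound on the $n_j$ is available and the ``tail product'' $\prod_{j=k+2}^\ell n_j$ can be enormous. This is exactly why the subsequence $\{n_{j_i}\}$ in \eqref{eq:index_choice} is introduced: the defining condition $n_{j_{i+1}} > n_{j_i}^{\gamma/3}$ forces each selected value to be at least the $(\gamma/3)$-power of the previous selected value, while the maximality in the definition means that between consecutive indices $j_i<j<j_{i+1}$ every $n_j$ is either $\leq n_{j_i}$ or $\leq n_{j_i}^{\gamma/3}$ — in any case $\leq n_{j_i}\vee n_{j_i}^{\gamma/3}$, which is $\leq M_{j_i-1}\vee M_{j_i-1}^{\gamma/3}$ (a bounded multiple in the exponent of $M_{\ell-1}$). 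I would choose $k$ to be $j_i$ where $j_i\leq \ell$ is the largest selected index that is $<\ell$ — equivalently, $k = \max\{j_i : j_i < \ell\}$, so $M_{k} = M_{\ell-1}$ or one step before. Then $\pi_\ell\pi_{k+1}^{-1} = \prod_{j=k+2}^\ell n_j$; each factor $n_j$ with $k+1\le j < \ell$ satisfies $n_j \le n_{j_i}^{\gamma/3}\vee n_{j_i} \le M_{\ell-1}^{\,1\vee(\gamma/3)}$ (no $j$ in this range is a selected index by maximality of $k$, and the last factor $n_\ell$ needs the same estimate), so $\pi_\ell\pi_{k+1}^{-1}\leq M_{\ell-1}^{c(\ell-k-1)}$ for a bounded $c$, while $\pi_{k+1}^{\gamma}\geq n_{j_i}^{\gamma}\geq M_{\ell-1}^{\gamma}$ times possibly larger contributions — and because each hop in the subsequence multiplies the relevant $n_{j_i}$ in such a way that $\pi_{k+1}$ accumulates \emph{all} prior selected values, $\pi_{k+1}^\gamma$ in fact beats $M_{\ell-1}^{\,c(\ell-k-1)}\cdot M_{\ell-1}^{\gamma/3}$; tracking the exponents carefully yields $\pi_\ell\pi_{k+1}^{-(1+\gamma)}\leq M_{\ell-1}^{-\gamma/3}$. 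Symmetrically, $\pi_k\pi_\ell^{-1} = \left(\prod_{j=k+1}^\ell n_j\right)^{-1}\leq n_\ell^{-1}$, and since $n_\ell\geq$ (the last selected value $\le \ell$, which is $M_{\ell-1}$ when $\ell$ is itself selected, else one handles it via the maximality condition) one obtains $\leq M_{\ell-1}^{-\gamma/3}$ after the same bookkeeping. The main obstacle, and the step I would spend the most care on, is this exponent accounting in case (b): making precise why the geometric-in-exponent growth forced by \eqref{eq:index_choice} exactly compensates the uncontrolled factors sitting between consecutive selected indices, and checking the edge cases where $\ell$ itself is or is not a selected index and where $i$ is small (recall $j_1=1$).
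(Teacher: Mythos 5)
Your plan follows the paper's high-level idea of rewriting both ratios as products of $n_j$'s and balancing the two competing constraints on $k$, but the explicit choices of $k(\ell)$ you propose are wrong in both cases. In case (a), setting $a=\log_m M$, one needs $\ell-k\geq \ell\gamma/(a+1+\gamma)$ for the bound on $\pi_k\pi_\ell^{-1}$ and, after taking $\log_m$ of $\pi_\ell\pi_{k+1}^{-(1+\gamma)}\leq M^{\ell-k-1}m^{-\gamma(k+1)}$ and simplifying, $k+1\geq \ell(a+1)/(a+1+\gamma)$ for the second bound. These two constraints pin $k$ down to $\lfloor\ell(a+1)/(a+1+\gamma)\rfloor$, which is the paper's choice. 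Your $k=\lceil\ell a/(a+1+\gamma)\rceil$ is too small by roughly $\ell/(a+1+\gamma)$, so the second bound fails once $\ell$ is large: with $m=M$, $\gamma=1$, $\ell=30$ you would take $k=10$ and get $\pi_\ell\pi_{k+1}^{-(1+\gamma)}=m^{30-22}=m^{8}$, which is not $\leq m^{-10}$. (Two smaller slips: a ceiling yields $\ell-k\leq\ell(1+\gamma)/(a+1+\gamma)$, not $\geq$; and your bound $M^{\ell-k-1}m^{-(k+1)(1+\gamma)}$ should carry $-\gamma(k+1)$ in the exponent.)

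In case (b), your choice $k=\max\{j_i: j_i<\ell\}$ is held fixed for all $\ell\in(j_i,j_{i+1}]$, and this is too coarse to satisfy either inequality. Take $\ell=j_i+1$ with $n_{j_i+1}=2$: then $\pi_k\pi_\ell^{-1}=n_{j_i+1}^{-1}=1/2$, which is not $\leq n_{j_i}^{-\gamma/3}=M_{\ell-1}^{-\gamma/3}$ once $n_{j_i}$ is large. Your subsequent sketch also lets the factor $M_{\ell-1}^{c(\ell-k-1)}$ grow with $\ell-k-1$, which can be as large as $j_{i+1}-j_i-1$ and so is not dominated by a fixed power of $\pi_{k+1}$; no simple uniform constant $c$ in the exponent can work here. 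The paper's construction is genuinely different: within each block $(j_i,j_{i+1}]$ it increases $k$ step by step through waypoints $w_m$ governed by the auxiliary indices $u(\cdot)$ and $v(\cdot)$, which monitor when partial products of the intermediate $n_j$'s cross the thresholds $n_{j_i}^{\gamma/3}$, $n_{j_i}^{2\gamma/3}$, $n_{j_i}^{\gamma}$; the inequalities \eqref{eq:u_ineq}--\eqref{eq:v_ineq} together with the three-way case analysis on $\ell\in B$, $\ell=w_m\in S$, $\ell\in(w_{m-1},w_m)\cap S$ are exactly the missing content your ``exponent accounting'' gestures at.
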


Observe that Theorem~\ref{thm: main} is now a direct consequence of Theorem~\ref{thm:AGG}, 
together with Lemmas \ref{lem:bounds} and \ref{lem:dep_nhoods}. 
The rest of the paper is devoted to proving the two lemmas.

\begin{proof}[Proof of Lemma~\ref{lem:dep_nhoods}] 
Suppose that $M < \infty$. Then
\begin{align*}
\pi_{\ell}\pi_{k+1}^{-(1+\gamma)} 
=  \frac{n_{k+2} \times \ldots \times n_{\ell}}{(n_1 \times \ldots \times n_{k+1})^\gamma} 
\leq  M^{\ell-(k+1)}m^{-\gamma (k+1)} =  m^{\ell \log_{m}(M) -(k+1)( \log_{m}(M)+ \gamma)}
\end{align*}
and 
\begin{align*}
\pi_k\pi_{\ell}^{-1} =  \frac{1}{n_{k+1} \times \ldots \times n_{\ell}} \leq  m^{k-\ell}.
\end{align*}
Choosing
\begin{align*}
k = \left \lfloor\frac{\ell\left(\log_{m}(M)+1\right)}{\log_{m}(M)+1+\gamma}\right\rfloor
\end{align*}
and using $\left\lfloor x\right\rfloor \leq x < \left\lfloor x\right\rfloor +1$ completes the proof of part \eqref{item:finite_sup}. 

Part \eqref{item:infinite_sup} is much more involved. To prove it, we specify a rule for picking $k = k(\ell)$ 
for every $\ell \geq 1$. Suppose that $M = \infty$ and recall the subsequence $\{n_{j_i}\}_{i=0}^{\infty} $ introduced in \eqref{eq:index_choice}. 
When $\ell = 1$, we choose $k(1) = 0$, and since $M_{0} = 1$, we see that 
\begin{align*}
\pi_k\pi_{\ell}^{-1} = \pi_0\pi_{1}^{-1} = \frac{1}{n_1} \leq M_{0}^{-\gamma/3}  \eqand \pi_{\ell}\pi_{k+1}^{-(1+\gamma)} 
= \pi_{1} \pi_{1}^{-(1+\gamma)} = \frac{1}{n_1^{\gamma}} \leq M_{0}^{-\gamma/3}.
\end{align*}
Fix $i\geq 1$. We now outline a procedure to choose $k$ when $\ell \in  (j_i, j_{i+1}]$  such that 
\begin{align} \label{eq:size_ratio_ub}
\pi_k\pi_{\ell}^{-1},\pi_{\ell} \pi_{k+1}^{-(1+\gamma)} \leq n_{j_i}^{-\gamma/3}.
\end{align}
Since $j_1 = 1$, this procedure will specify $k(\ell)$ for all $\ell \geq 2$, and the result will follow since 
$\ell \in  (j_i, j_{i+1}]$ if and only if $M_{\ell-1}=n_{j_i}$.

We first define
\begin{align*}
B = \{b \in (j_i, j_{i+1}] : n_{b} \geq n_{j_i}^{\gamma/3}\} \eqand S = (j_i, j_{i+1}]\setminus B.
\end{align*}
 The sets $B$ and $S$ should be interpreted as sets of ``big'' and ``small'' indices, respectively. 
For any integer $r\in [{j_{i}}-1, {j_{i+1}}-2]$, define the integers $u(r)$ and $v(r)$ by
\begin{align*}
u(r) = \min \left\{u \in [r+2, {j_{i+1}}]: u \in B \text{ or }   \prod \limits_{ j = r+2}^{u} n_{j} \geq n_{j_{i}}^{2\gamma / 3}\right\}
\end{align*}
and 
\begin{align*}
v(r) + 1 =
\begin{cases}
\min \Big\{v \in [r+2, {j_{i+1}}]:\prod \limits_{ j = r+2}^{v} n_{j} \geq n_{j_{i}}^{\gamma / 3}\Big\}, \quad u(r) \in S, \\
u(r), \quad u(r) \in B.  
\end{cases}
\end{align*}
We see that these quantities are always well defined, because $j_{i+1} \in B$ by \eqref{eq:index_choice}.

One may interpret $u(r)$ and $v(r)$ as follows: Suppose $\ell_0 \in (j_i, j_{i+1})$ and $k_0 = k(\ell_0)< \ell_0$ satisfies \eqref{eq:size_ratio_ub}. 
Then $u(k_0)$ tells us how far we can increase $\ell$ past $\ell_0$ while keeping $k(\ell)$ fixed at $k_0$ and not violating \eqref{eq:size_ratio_ub}. 
For example, suppose we keep $k(u(k_0)) = k_0$ once $\ell$ reaches $u(k_0)$, then we are no longer guaranteed that 
$\pi_{u(k_0)} \pi_{k_0+1}^{-(1+\gamma)} \leq n_{j_i}^{-\gamma/3}$. We see that $k(u(k_0))$ must be increased past $k_0$, 
but it cannot be increased too much, because we must maintain $\pi_{k(u(k_0))}\pi_{u(k_0)}^{-1} \leq n_{j_i}^{-\gamma/3}$. 
Hence, we use $v(u(k_0))$ to tell us exactly how far to increase $k(u(k_0))$. We then repeat this process inductively.

We now formalize the discussion above. For any $\ell \in [j_{i},j_{i+1}]$, we set
\begin{align*}
k(\ell) = 
\begin{cases}
k(w_{m-1}), \quad w_{m-1} \leq \ell < w_m, \\
v(k(w_{m-1})), \quad \ell = w_m,
\end{cases} 
\end{align*}
where 
\begin{align*}
w_0 = j_i, \quad k(w_0) = j_i - 1 \eqand w_{m} = u(k(w_{m-1}))
\end{align*}
for all $m \geq 1$ such that $w_m \leq j_{i+1}$. 

We now verify that choosing $k(\ell)$ according to this procedure satisfies \eqref{eq:size_ratio_ub} for all $\ell \in (j_{i},j_{i+1}]$.
Observe that when $u(r) \in S$, then
\begin{align}
\prod \limits_{ j = r +2}^{u(r)-1} &n_{j}  < n_{j_{i}}^{2\gamma / 3}, \quad \prod \limits_{ j = r +2}^{u(r)} n_{j}  < n_{j_{i}}^{\gamma},\label{eq:u_ineq}
\end{align}
and
\begin{align}
\prod \limits_{ j = r+2}^{v(r)} n_{j} <  n_{j_{i}}^{\gamma / 3}, \label{eq:v_ineq}
\end{align}
with the convention that the empty product equals $1$. For $m \geq 1$, if $\ell = w_m \in B$, 
then $k(\ell) = \ell - 1$ and
\begin{align*}
\pi_{\ell} \pi_{k+1}^{-(1+\gamma)}= \pi_{\ell}^{-\gamma} \leq n_{j_i}^{-\gamma/3} \eqand \pi_k \pi_{\ell}^{-1}   
= n_{w_m}^{-1} \leq n_{j_i}^{-\gamma/3}.
\end{align*}

Next, suppose that $\ell = w_m \in S$ for some $m \geq 1$ and  $w_m < j_{i+1}$. 
Then using \eqref{eq:u_ineq} together with the definition $v(r)$ and the fact that $j_i \leq k(w_{m-1})+1$, we see that
\begin{multline*}
\pi_{\ell} \pi_{k+1}^{-(1+\gamma)} = \pi_{w_m} \pi_{k(w_{m})+1}^{-(1+\gamma)} \\
= \left(\prod \limits_{ j = 1}^{k(w_{m-1})+1} n_{j}^{-\gamma}\right) \left(\prod \limits_{ j = k(w_{m-1})+2}^{u(k(w_{m-1}))} n_{j}\right)
\left(\prod \limits_{ j = k(w_{m-1})+2}^{v(k(w_{m-1}))+1} n_{j}^{-(1+\gamma)}\right) \\
<\ n_{j_i}^{-\gamma} n_{j_i}^{\gamma} n_{j_i}^{-\gamma/3} = n_{j_i}^{-\gamma/3}.
\end{multline*}
Furthermore, \eqref{eq:v_ineq} implies
\begin{multline*}
\pi_{k} \pi_{\ell}^{-1}   
=\pi_{k(w_{m})} \pi_{w_m}^{-1}\\
= \left(\prod \limits_{ j = k(w_{m-1})+2}^{v(k(w_{m-1}))} n_{j}\right) 
\left(\prod \limits_{ j = k(w_{m-1})+2}^{u(k(w_{m-1}))} n_{j}^{-1}\right)
 < n_{j_i}^{\gamma/3} n_{j_i}^{-2\gamma/3} = n_{j_i}^{-\gamma/3}.
\end{multline*}

Lastly, suppose $\ell \in (w_{m-1},w_m) \cap S$ for some $m \geq 1$ and  $w_m < j_{i+1}$. 
Then $k(\ell) = k(w_{m-1})$, and by \eqref{eq:u_ineq} we have
\begin{multline*}
\pi_{\ell} \pi_{k+1}^{-(1+\gamma)} \leq \pi_{w_m - 1} \pi_{k(w_{m-1})+1}^{-(1+\gamma)} \\
= \ \left(\prod \limits_{ j = 1}^{k(w_{m-1})+1} n_{j}^{-\gamma}\right) 
\left(\prod \limits_{ j = k(w_{m-1})+2}^{u(k(w_{m-1}))-1} n_{j}\right)
<\ n_{j_i}^{-\gamma} n_{j_i}^{2\gamma/3} = n_{j_i}^{-\gamma/3},
\end{multline*}
and
\begin{align*}
&\pi_{k} \pi_{\ell}^{-1}   
 \leq \pi_{k(w_{m-1})} \pi_{w_{m-1}}^{-1} \leq n_{j_i}^{-\gamma/3}.
\end{align*}
Hence, our procedure of choosing $k(\ell)$ satisfies \eqref{eq:size_ratio_ub} for all $\ell \in (j_i,j_{i+1}]$, and the proof is complete.
\end{proof}

\subsection*{Proof of Lemma~\ref{lem:bounds}} 
\label{sec:bounds_proof}
$ $

We first note that $\E\left[X_{g}^{\ell}\right]=p_{\ell}$ for all $g\in G_{\ell}$, so
\begin{align*}
b_{1}=\sum_{g\in G_{\ell}}\sum_{h\in B_{g}}p_{\ell}^{2}=\pi_{\ell}\pi_{k}p_{\ell}^{2}=\pi_{k}\pi_{\ell}^{-1}\lambda(\ell)^{2}
\end{align*}
as claimed.

We now move on to bound $b_{2} =\sum_{g\in G_{\ell}}\sum_{h\in B_{g}\setminus\{g\}}p_{gh}$. For $g\in G_{\ell}$, define 
\begin{align} \label{eq:def_YT}
Y_{g}^{j}=\sum_{i=0}^{j}\alpha_{i}\eps(g_{i}) \eqand T_{g}^{j}=\sum_{i=j+1}^{\infty}\alpha_{i}\eps(g_{i}), \quad j \geq 0.
\end{align}
For any $g\in G_{\ell}$ and $h\in B_{g}\setminus\{g\}$, there is a minimal $0\leq j\leq k-1$ such that 
\begin{align*}
U_{g}=Y_{g}^{j}+T_{g}^{j} \eqand U_{h}=Y_{h}^{j}+T_{g}^{j}.
\end{align*} 
Proposition~\ref{cond_exp_fcn} implies that 
\begin{align*}
p_{gh} & =\E\left[X_{g}^{\ell}X_{h}^{\ell}\right]=\P\left\{Y_{g}^{j}+T_{g}^{j}\in I_{\ell}, Y_{h}^{j}+T_{g}^{j}\in I_{\ell}\right\}\\
 & =\E\left[\P\left\{Y_{g}^{j}+T_{g}^{j}\in I_{\ell}, Y_{h}^{j}+T_{g}^{j}\in I_{\ell}\left|T_{g}^{j}\right.\right\}\right]
=\E\left[\P\left\{Y_{g}^{j}\in I_{\ell} - T_{g}^{j}\left|T_{g}^{j}\right.\right\}^{2}\right],
\end{align*}
where the last equality used the fact that $Y_{g}^{j}$ and $Y_{h}^{j}$ are independent and equal in distribution. 

Arguing as in \eqref{eq:convolution} shows that $Y_{g}^{j}$ has density $f_{j}$ satisfying 
$\norm{f_{j}}\leq\frac{1}{\alpha_{0}}\norm{\eta_{\eps}}$. Thus, since $Y_{g}^{j}$ and $T_{g}^{j}$ are independent, we have
$\P\left\{Y_{g}^{j}\in I_{\ell} - T_{g}^{j}\left|T_{g}^{j}\right.\right\}=\varphi\left(T_{g}^{j}\right)$ where 
\[
\varphi(t)=\P\left\{Y_{g}^{j}\in I_{\ell}-t\right\}=\int_{I_{\ell}-t} f_{j}(x)dx \leq \norm{f_{j}}\abs{I_{\ell}-t}=\frac{1}{\alpha_{0}}\norm{\eta_{\eps}}c\pi_{\ell}^{-1}.
\]
It follows that
\begin{align*}
b_{2} =\sum_{g\in G_{\ell}}\sum_{h\in B_{g}\setminus\{g\}}p_{gh}
=&\ \pi_{\ell}\sum_{j=0}^{k-1}(n_{j+1}-1)\pi_{j}\E\left[\P\left\{Y_{g}^{j}\in I_{\ell} - T_{g}^{j}\left|T_{g}^{j}\right.\right\}^{2}\right] \\
\leq&\ \pi_{\ell}(\pi_{k}-1)\frac{1}{\alpha_0^2}\norm{\eta_{\eps}}^2 c^2 \pi_{\ell}^{-2} \leq \pi_{k}\frac{1}{\alpha_0^2}\norm{\eta_{\eps}}^2 c^2 \pi_{\ell}^{-1} 
\end{align*}
where the first inequality used $\sum_{j=0}^{k-1}\left(n_{j+1}-1\right)\pi_{j} = \pi_{k}-1$.
Observe that this bound is of the correct order of magnitude since Jensen's inequality shows that
\begin{align*}
\E\left[\P\left\{Y_{g}^{j}\in I_{\ell} - T_{g}^{j}\left|T_{g}^{j}\right.\right\}^{2}\right] 
\geq \E\left[\P\left\{Y_{g}^{j}\in I_{\ell} - T_{g}^{j}\left|T_{g}^{j}\right.\right\}\right]^{2} 
= p_{\ell}^2 = O(\pi_{\ell}^{-2}).
\end{align*}

We finish with the bound on $b_3$. Let $g^{(1)}, \ldots ,\ g^{(m)}$ be a transversal for $G_{k}$ in $G_{\ell}$ 
with $g^{(1)}=e$ and $m=\pi_{\ell}\pi_{k}^{-1}$. 
Then for any $g \in G_{\ell}$, the dependency neighborhood $B_g$ is one of 
$B^{(1)},\ldots,B^{(m)}$, where 
\begin{align*}
B^{(i)}:=g^{(i)}+G_{k}, \quad 1 \leq i \leq m.
\end{align*}
Let $\mathcal{G}^{(i)}$ be any sub-$\sigma$-field with respect to which 
$V^{(i)}:=V_{g^{(i)}}=\sum_{g\in G_{\ell}\setminus B^{(i)}}X_{g}^{\ell}$ is measurable. 
Then, noting that 
\begin{align*}
\left(V^{(i)},\sum_{g\in B^{(i)}}X_{g}^{\ell}\right)
\stackrel{d}{=} \left(V^{(j)},\sum_{g\in B^{(j)}}X_{g}^{\ell}\right)
\end{align*}
for all $1 \leq i,j \leq m$, we have
\begin{align*} 
\sum_{g\in G_{\ell}}\E\left[(X_{g}^{\ell}-p_{\ell})f(V_{g}+1)\right] 
 & =\sum_{i=1}^{m}\E\left[f(V^{(i)}+1)\sum_{g\in B^{(i)}}\left(X_{g}^{\ell}-p_{\ell}\right)\right] \notag \\
 & =\pi_{\ell}\pi_{k}^{-1}\E\left[f(V^{(1)}+1)\sum_{g\in G_{k}}\left(X_{g}^{\ell}-p_{\ell}\right)\right] \notag \\
 & =\pi_{\ell}\pi_{k}^{-1}\E\left[f(V^{(1)}+1)\E\left[\sum_{g\in G_{k}}\left(X_{g}^{\ell}-p_{\ell}\right)\left|\mathcal{G}^{(1)}\right.\right]\right] \notag \\
 & \leq\pi_{\ell}\pi_{k}^{-1}\norm{f}\E\left|\E\left[\sum_{g\in G_{k}}\left(X_{g}^{\ell}-p_{\ell}\right)\left|\mathcal{G}^{(1)}\right.\right]\right| \notag.
\end{align*}

Recalling the definition of $T_{g}^{k}$ from \eqref{eq:def_YT}, observe that as $g$ ranges over $G_{k}$, $Y_{g}^{k}$ varies but $T_{g}^{k}$ does not.  
By construction, $V^{(1)}$ is measurable with respect to the $\sigma$-field generated by $T_{g}^{k}$ and 
$\{\eps(g_{j})\}_{g\in G_{\ell}\setminus G_{k},j\geq0}$. We set $\mathcal{G}^{(1)}$ equal to this $\sigma$-field.
Since $(X_{g},T_{g}^{k})$ is independent of $\{\eps(g_{j})\}_{g\in G_{\ell}\setminus G_{k},j\geq0}$ for each $g\in G_{k}$, 
Proposition \ref{cond_exp_ind} implies that
\begin{align*}
\E\left[\sum_{g\in G_{k}}\left(X_{g}^{\ell}-p_{\ell}\right)\left|\mathcal{G}^{(1)}\right.\right]
& =\E\left[\sum_{g\in G_{k}}\left(X_{g}^{\ell}-p_{\ell}\right)\left|T_{g}^{k}\right.\right]
=\sum_{g\in G_{k}}\left(\E\left[X_{g}^{\ell}\left|T_{g}^{k}\right.\right]-p_{\ell}\right).
\end{align*}
It follows that
\begin{align}
 \pi_{\ell}\pi_{k}^{-1}\norm{f}
\E\left|\E\left[\sum_{g\in G_{k}}(X_{g}^{\ell}-p_{\ell})\left|\mathcal{G}^{(1)}\right.\right]\right|
 & = \pi_{\ell}\pi_{k}^{-1}\norm{f}\E\bigg|\sum_{g\in G_{k}}
\left(\E\left[X_{g}^{\ell}\big|T_{g}^{k}\right]-p_{\ell}\right)\bigg| \notag  \\
& \leq \pi_{\ell}\pi_{k}^{-1}\norm{f}\sum_{g\in G_{k}} \E\left|\left(\E\left[X_{g}^{\ell}\big|T_{g}^{k}\right]-p_{\ell}\right)\right|   \notag  \\
& = \pi_{\ell}\norm{f} \E\left|\left(\E\left[X_{g}^{\ell}\big|T_{g}^{k}\right]-p_{\ell}\right)\right|, \label{eq:room_for_improvement}
\end{align}
so 
\begin{align*}
b_3 & \leq \pi_{\ell}\norm{f}\E \abs{\E\left[X_{g}^{\ell}\big|T_{g}^{k}\right]-p_{\ell}} \\
 & \leq \pi_{\ell}\left(1\wedge\lambda(\ell)^{-\frac{1}{2}}\right)\E\abs{\P\left\{Y_g^{k} \in I_{\ell} - T_{g}^{k} \big| T_{g}^{k}\right\} - p_{\ell}}.
\end{align*}

Let $\nu_k$ denote the distribution of  $T_{g}^{k}$ and let $\widetilde T_{g}^{k}$ be an independent copy of $T_{g}^{k}$. 
Observe that $T_g^k \in [-a_k,a_k]$ for all $g \in G_{\ell}$, where 
\begin{align*}
a_k := \sum_{i=k+1}^{\infty}\alpha_{i}.
\end{align*}
We see that
\begin{align*}
& \E\abs{\P\left\{Y_g^{k} \in I_{\ell} - T_{g}^{k} \big| T_{g}^{k}\right\} - p_{\ell}}\\
& \quad = \int_{-a_k}^{a_k} \abs{\P\left\{Y_g^{k} \in I_{\ell} - t\right\}
 - \int_{-a_k}^{a_k} \E\left[ X_g^{\ell} \big| \widetilde T_{g}^{k} = s \right] d\nu_k(s)}d\nu_k(t)\\
& \quad\enskip\leq \int_{-a_k}^{a_k}\int_{-a_k}^{a_k} \abs{\P\left\{Y_g^{k} \in I_{\ell} - t\right\}
 - \P\left\{Y_g^{k} \in I_{\ell} - s\right\}} d\nu_k(s) d\nu_k(t)\\ 
& \qquad= \E_{T_{g}^{k}, \widetilde T_{g}^{k}} \abs{\P\left\{Y_g^{k} \in \{I_{\ell} - T_{g}^{k}\} 
\setminus \{I_{\ell} - \widetilde T_{g}^{k}\}\right\} -  \P\left\{Y_g^{k} \in \{I_{\ell} - \widetilde T_{g}^{k}\} \setminus \{I_{\ell} - T_{g}^{k}\}\right\}}.
\end{align*} 
Because $T_{g}^{k} \in [-a_k, a_k]$, we have
\begin{align*}
&\{ I_{\ell} - T_{g}^{k}\} \supseteq \left[ t_0 - \frac{c}{2\pi_{\ell}}  + a_k, t_0 + \frac{c}{2\pi_{\ell}} - a_k \right],\\
& \{ I_{\ell} - T_{g}^{k}\} \subseteq \left[ t_0 - \frac{c}{2\pi_{\ell}}  - a_k, t_0 + \frac{c}{2\pi_{\ell}} + a_k \right],
\end{align*}
hence
\begin{multline*}
\{I_{\ell} - T_{g}^{k}\} \setminus \{ I_{\ell} - \widetilde T_{g}^{k}\}\\
\subseteq \left[ t_0 - \frac{c}{2\pi_{\ell}}  - a_k, t_0 + \frac{c}{2\pi_{\ell}} + a_k \right]
\setminus \left[ t_0 - \frac{c}{2\pi_{\ell}}  + a_k, t_0 + \frac{c}{2\pi_{\ell}} - a_k \right] \\
=\left[ t_0 - \frac{c}{2\pi_{\ell}}  - a_k, t_0 - \frac{c}{2\pi_{\ell}} + a_k \right] 
\bigcup \left[ t_0 + \frac{c}{2\pi_{\ell}}  - a_k, t_0 + \frac{c}{2\pi_{\ell}} + a_k \right].
\end{multline*}
Therefore, 
\begin{align*}
&\E_{T_{g}^{k}, \widetilde T_{g}^{k}} \abs{\P\left\{Y_g^{k} \in \{I_{\ell} - T_{g}^{k}\} 
\setminus \{I_{\ell} - \widetilde T_{g}^{k}\}\right\} - \P\left\{Y_g^{k} \in \{I_{\ell}
 - \widetilde T_{g}^{k}\} \setminus \{I_{\ell} - T_{g}^{k}\}\right\}} \\ 
& \enskip \leq\ 2\P\left\{Y_g^{k} \in \left[ t_0 - \frac{c}{2\pi_{\ell}}  - a_k, t_0 - \frac{c}{2\pi_{\ell}} + a_k \right] 
\bigcup \left[ t_0 + \frac{c}{2\pi_{\ell}}  - a_k, t_0 + \frac{c}{2\pi_{\ell}} + a_k \right]\right\}  \\
& \enskip \leq\ 2\P\left\{U_g \in \left[ t_0 - \frac{c}{2\pi_{\ell}}  - 2a_k, t_0 - \frac{c}{2\pi_{\ell}} + 2a_k \right] 
\bigcup \left[ t_0 + \frac{c}{2\pi_{\ell}}  - 2a_k, t_0 + \frac{c}{2\pi_{\ell}} + 2a_k \right]\right\} \\
& \enskip=\ 8a_k \left(\frac{1}{4a_k} \int_{t_0 - \frac{c}{2\pi_{\ell}}  - 2a_k}^{t_0 - \frac{c}{2\pi_{\ell}}  
+ 2a_k} \eta(x) dx + \frac{1}{4a_k}\int_{t_0 + \frac{c}{2\pi_{\ell}}  - 2a_k}^{t_0 + \frac{c}{2\pi_{\ell}}  + 2a_k} \eta(x) dx \right)  \\ 
& \enskip \leq\ 16\norm{\eta} K \pi_{k+1}^{-(1+\gamma)} \\
& \enskip \leq\ 16\frac{1}{\alpha_0} \norm{\eta_{\eps}} K \pi_{k+1}^{-(1+\gamma)},
\end{align*}
where we recall that $\eta(x)$ is the density of $U_g$ and is bounded by 
$\frac{1}{\alpha_0} \norm{\eta_{\eps}}$. 
This concludes the proof of the lemma. \qed

$ $

The selection of the dependency neighborhoods in Lemma~\ref{lem:dep_nhoods} is based on the upper bounds derived in the above proof.  
Although the bounds on $b_1$ and $b_2$ are of the correct order of magnitude, it is possible that our use of the triangle inequality in 
\eqref{eq:room_for_improvement} may have considerably affected the quality of the bound on $b_3$. 
Ideally, we would like to keep the summation inside the absolute value in order to capitalize on the resulting cancellation, 
but it is not clear how to estimate the left hand side of \eqref{eq:room_for_improvement} directly. 
In the original statement of Theorem \ref{thm:AGG}, the triangle inequality is also employed \cite{AGG}, 
but the choice of dependency neighborhoods in our case potentially obviates this step. 
This observation may be useful in other Poisson approximation problems in which there is long range 
dependence and the dependency neighborhoods partition the collection of indicators.

\section{Appendix}
\label{sec:appendix}
\setcounter{equation}{0}

The following propositions are easy results about conditional expectation which are needed in the preceding analysis. 
In both cases, we assume that the relevant random variables are real-valued and defined on an underlying probability 
space denoted $(\Omega,\mathcal{F},P)$.

\begin{proposition}
\label{cond_exp_ind}
If $X\in L^{1}$ and $(X,Y)$ is independent of $(Z_{1},\ldots,Z_{m})$,
then 
\[
\E\left[X\left|Y,Z_{1},\ldots,Z_{m}\right.\right]=\E\left[X\left|Y\right.\right].
\]
\end{proposition}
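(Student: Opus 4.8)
The plan is to verify directly that $\E[X\,|\,Y]$ satisfies the two defining properties of $\E[X\,|\,Y,Z_{1},\ldots,Z_{m}]$: measurability with respect to the larger $\sigma$-field, and the correct averaging identity over that $\sigma$-field. Measurability is immediate, since $\E[X\,|\,Y]$ is $\sigma(Y)$-measurable and $\sigma(Y)\subseteq\sigma(Y,Z_{1},\ldots,Z_{m})$. So the entire content is to show that for every $A\in\sigma(Y,Z_{1},\ldots,Z_{m})$,
\[
\int_{A} X\,dP=\int_{A} \E[X\,|\,Y]\,dP.
\]

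First I would reduce to a generating $\pi$-system. The collection $\mathcal{P}=\{B\cap C:\ B\in\sigma(Y),\ C\in\sigma(Z_{1},\ldots,Z_{m})\}$ is a $\pi$-system generating $\sigma(Y,Z_{1},\ldots,Z_{m})$, so by the uniqueness part of the Dynkin $\pi$--$\lambda$ theorem it suffices to check the displayed identity for $A=B\cap C\in\mathcal{P}$. Here one writes $X=X^{+}-X^{-}$ and uses $X\in L^{1}$ together with $\abs{\E[X\,|\,Y]}\leq\E[\abs{X}\,|\,Y]\in L^{1}$, so that both sides of the display define finite signed measures on $\sigma(Y,Z_{1},\ldots,Z_{m})$; two such measures that agree on a generating $\pi$-system and on the whole space coincide.

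Next, for $A=B\cap C$ with $B\in\sigma(Y)$ and $C\in\sigma(Z_{1},\ldots,Z_{m})$, observe that $X\mathbf{1}_{B}$ is a measurable function of $(X,Y)$ while $\mathbf{1}_{C}$ is a measurable function of $(Z_{1},\ldots,Z_{m})$; hence the hypothesis that $(X,Y)$ is independent of $(Z_{1},\ldots,Z_{m})$ yields
\[
\int_{B\cap C} X\,dP=\E\!\left[X\mathbf{1}_{B}\mathbf{1}_{C}\right]=\E\!\left[X\mathbf{1}_{B}\right]P(C).
\]
Similarly, $\E[X\,|\,Y]\,\mathbf{1}_{B}$ is $\sigma(Y)$-measurable, hence also independent of $\mathbf{1}_{C}$, so
\[
\int_{B\cap C}\E[X\,|\,Y]\,dP=\E\!\left[\E[X\,|\,Y]\,\mathbf{1}_{B}\right]P(C).
\]
Finally, since $B\in\sigma(Y)$, the defining property of $\E[X\,|\,Y]$ gives $\E\!\left[\E[X\,|\,Y]\,\mathbf{1}_{B}\right]=\E\!\left[X\mathbf{1}_{B}\right]$, so the two right-hand sides are equal. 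This establishes the identity on $\mathcal{P}$, hence on all of $\sigma(Y,Z_{1},\ldots,Z_{m})$, and the proposition follows.

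There is no serious obstacle here; the only points that require a little care are the measure-theoretic bookkeeping — writing the two sides as differences of finite measures so that the $\pi$--$\lambda$ uniqueness theorem applies — and correctly packaging the random variables so that independence of $(X,Y)$ from $(Z_{1},\ldots,Z_{m})$ can be invoked, i.e.\ recognizing $X\mathbf{1}_{B}$ and $\E[X\,|\,Y]\,\mathbf{1}_{B}$ as measurable functions of $(X,Y)$ and $\mathbf{1}_{C}$ as a measurable function of $(Z_{1},\ldots,Z_{m})$.
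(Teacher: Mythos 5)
Your proof is correct and is essentially the same as the paper's: both verify measurability, reduce to the $\pi$-system $\{B\cap C: B\in\sigma(Y),\,C\in\sigma(Z_1,\ldots,Z_m)\}$, use independence of the appropriate blocks to factor out $P(C)$, and invoke the $\pi$--$\lambda$ theorem. The only cosmetic difference is that you phrase the final step as uniqueness of finite signed measures, while the paper phrases it as the collection of good sets forming a $\lambda$-system; these are the same argument.
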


\begin{proof}
We first observe that $\E\left[X\left|Y\right.\right]\in\sigma(Y)\subseteq\sigma(Y,Z_{1},\ldots,Z_{m})$. 
Now let $\mathcal{P}=\{A\cap B:A\in\sigma(Y),B\in\sigma(Z_{1},\ldots,Z_{m})\}$.
Then $\mathcal{P}$ is a $\pi$-system which generates $\sigma(Y,Z_{1},\ldots,Z_{m})$. 
The independence assumption ensures that for any $A\cap B\in\mathcal{P}$,
\begin{align*}
\int_{A\cap B}\E\left[X\left|Y\right.\right]dP & =\int_{A}1_{B}\E\left[X\left|Y\right.\right]dP=P(B)\int_{A}\E\left[X\left|Y\right.\right]dP\\
 & =P(B)\int_{A}XdP=\int_{A}1_{B}XdP=\int_{A\cap B}XdP.
\end{align*}

Since the collection of events $S$ for which $\int_{S}\E\left[X\left|Y\right.\right]dP=\int_{S}XdP$ is a $\lambda$-system 
containing $\mathcal{P}$ (by linearity and dominated convergence), the result follows from Dynkin's theorem.
\end{proof}

\begin{proposition}
\label{cond_exp_fcn}
If $X,Y,Z$ are independent random variables with $X\stackrel{d}{=}Y$, then
\[
\E\left[f\left(X+Z\right)f\left(Y+Z\right)\right]=\E\left[\E\left[\left.f\left(X+Z\right)\right|Z\right]^{2}\right]
\]
for any integrable or nonnegative function $f$.
\end{proposition}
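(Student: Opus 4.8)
The plan is to prove Proposition~\ref{cond_exp_fcn} by conditioning on $Z$ and exploiting the independence of $X$ and $Y$. First I would fix a Borel set, or more directly work with the regular conditional distribution: since $X,Y,Z$ are independent, the conditional distribution of $(X,Y)$ given $Z$ is just the product of the (unconditional) laws of $X$ and $Y$, and this product does not depend on the value of $Z$. Concretely, let $\mu$ denote the common law of $X$ and $Y$ (using $X\stackrel{d}{=}Y$). Then for any bounded measurable $g$ on $\R^{2}$ we have $\E[g(X,Y)\mid Z] = \int\int g(x,y)\,d\mu(x)\,d\mu(y)$ almost surely, and in particular we may substitute $g(x,y) = f(x+z)f(y+z)$ with $z$ frozen.

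The key computation is then Fubini/Tonelli applied to the conditional expectation. Taking $g_z(x,y) = f(x+z)f(y+z)$, independence gives
\[
\E\left[f(X+Z)f(Y+Z)\,\middle|\,Z=z\right] = \int f(x+z)\,d\mu(x)\int f(y+z)\,d\mu(y) = \left(\int f(x+z)\,d\mu(x)\right)^{2}.
\]
But $\int f(x+z)\,d\mu(x)$ is exactly a version of $\E[f(X+Z)\mid Z=z]$, again because $X$ is independent of $Z$. Hence $\E[f(X+Z)f(Y+Z)\mid Z] = \E[f(X+Z)\mid Z]^{2}$ almost surely, and taking expectations of both sides yields the claimed identity $\E[f(X+Z)f(Y+Z)] = \E[\E[f(X+Z)\mid Z]^{2}]$. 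The case of nonnegative $f$ is handled by the same argument with Tonelli's theorem in place of Fubini's, so no integrability is lost.

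The main obstacle is purely one of rigor in handling conditional expectations: one must justify the ``freezing'' step, i.e.\ that $\E[h(X,Y,Z)\mid Z]$ may be evaluated by integrating out $(X,Y)$ against its law while treating $Z$ as a constant. The cleanest way to make this airtight is to invoke the standard substitution lemma for conditional expectation (sometimes called the ``independence lemma'' or ``freezing lemma''): if $(X,Y)$ is independent of $Z$ and $h$ is bounded measurable, then $\E[h(X,Y,Z)\mid Z] = \psi(Z)$ where $\psi(z) = \E[h(X,Y,z)]$. Applying this with $h((x,y),z) = f(x+z)f(y+z)$ gives $\psi(z) = \E[f(X+z)f(Y+z)] = \E[f(X+z)]\E[f(Y+z)] = \E[f(X+z)]^{2}$, where the middle equality is independence of $X$ and $Y$ and the last uses $X\stackrel{d}{=}Y$. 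Since also $\E[f(X+Z)\mid Z] = \varphi(Z)$ with $\varphi(z)=\E[f(X+z)]$ by the same lemma, we get $\psi(Z) = \varphi(Z)^{2} = \E[f(X+Z)\mid Z]^{2}$, and one more application of the tower property finishes the proof. For nonnegative (possibly unbounded) $f$ one runs this through a monotone approximation $f\wedge n \uparrow f$ and passes to the limit by monotone convergence.
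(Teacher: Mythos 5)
Your proof is correct and follows essentially the same route as the paper: condition on $Z$, use independence to factor $\E[f(X+Z)f(Y+Z)\mid Z]$ as a product via Fubini--Tonelli, then use $X\stackrel{d}{=}Y$ to write it as a square of $\E[f(X+Z)\mid Z]$ and take expectations. The only cosmetic difference is that you invoke the standard freezing/substitution lemma as a black box, whereas the paper's proof verifies that step directly from the defining property of conditional expectation by integrating over sets $F=\{Z\in A\}$.
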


\begin{proof}
Let $\mu$ denote the common distribution of $X$ and $Y$, and let $\nu$ denote the distribution of $Z$.
By independence, $\left(X,Y,Z\right)$ has distribution $\mu\times\mu\times\nu$.

Now for every $F\in\sigma(Z)$, there is a Borel set $A$ such that $F=\left\{ Z\in A\right\} $, so the 
change of variables formula and Fubini-Tonelli show that 
\begin{align*}
\int_{F}f\left(X+Z\right)f\left(Y+Z\right)dP & =\int_{A}\int_{\R}\int_{\R}f(x+z)f(y+z)d\mu(x)d\mu(y)d\nu(z)\\
 & =\int_{A}\left(\int_{\R}f(x+z)d\mu(x)\right)\left(\int_{\R}f(y+z)d\mu(y)\right)d\nu(z)\\
 & =\int_{A}\left(\int_{\R}f(x+z)d\mu(x)\right)^{2}d\nu(z)\\
 & =\int_{A}\E\left[f\left(X+z\right)\right]^{2}d\nu(z)=\int_{F}g(Z)^{2}dP
\end{align*}
with $g(z)=\E\left[f\left(X+z\right)\right]$.

On the other hand, $g(Z)=\E\left[\left.f\left(X+Z\right)\right|Z\right]$ since 
\begin{align*}
\int_{F}f(X+Z)dP & =\int_{A}\int_{\R}f(x+z)d\mu(x)d\nu(z)\\
 & =\int_{A}\E\left[f(X+z)\right]d\nu(z)=\int_{F}g(Z)dP.
\end{align*}
Therefore, $\E\left[\left.f\left(X+Z\right)f\left(Y+Z\right)\right|Z\right]=\E\left[\left.f\left(X+Z\right)\right|Z\right]^{2}$
and the result follows upon taking expectations.
\end{proof}

\bibliographystyle{abbrv}
\bibliography{Poisson}

\end{document}